\documentclass[12pt]{amsart}
\usepackage{amsmath}
\usepackage{amssymb}
\usepackage{bm}
\usepackage{esint}
\usepackage{graphicx}
\usepackage{verbatim}
\usepackage{enumitem}
\usepackage[normalem]{ulem}
\usepackage{stackengine}
\usepackage{algorithm}
\usepackage{algpseudocode}
\stackMath
\setlist[enumerate,1]{leftmargin=*}
\setlist[itemize,1]{leftmargin=*}
\usepackage{mathrsfs}
\usepackage[dvipsnames]{color}
\usepackage{todonotes}
\usepackage{float}
\usepackage{etoolbox}
\usepackage[pdftex,pdfstartview=FitH,pdfborderstyle={/S/B/W 1},%
colorlinks=true, linkcolor=blue, urlcolor=blue, citecolor=blue,%
pagebackref=true]{hyperref}
\usepackage[nobysame,alphabetic,initials,msc-links]{amsrefs}
\usepackage{imakeidx}
\usepackage{tikz}
\usetikzlibrary{decorations.markings}

\newcommand{\veps}{\varepsilon}
\newcommand{\vphi}{\varphi}

\newcommand{\NN}{\mathbb{N}}
\newcommand{\ZZ}{\mathbb{Z}}

\newcommand{\RR}{\mathbb{R}}

\newcommand{\set}[2]{\left\{#1 : #2\right\}}

\newcommand{\PP}{\mathbb{P}}
\newcommand{\EE}{\mathbb{E}}
\newcommand{\cF}{\mathcal{F}}
\newcommand{\cB}{\mathcal{B}}
\newcommand{\cP}{\mathcal{P}}

\newcommand{\cV}{\mathcal{V}}

\newtheorem{defn}{Definition}[section]
\newtheorem{thm}[defn]{Theorem}
\newtheorem{prop}[defn]{Proposition}

\newtheorem{cor}[defn]{Corollary}
\newtheorem{qn}[defn]{Question}
\newtheorem{rem}[defn]{Remark}

\numberwithin{equation}{section}

\begin{document}

\title{Sequence Entropy of Rank One Systems}
\author{Shigenori Takeda}
\address{S.T.: Department of Mathematics, National University of Singapore, 119076, Singapore}
\email{e0407799@u.nus.edu}
\date{\today}

\setlength{\marginparwidth}{2 cm}

\begin{abstract}
    We study the sequence entropy of rank one measure-preserving systems along subexponential sequences. We prove that the sequence entropy along a large class of sequences can be infinite using Ornstein's probabilistic constructions. Moreover, we show that sequence entropy necessarily vanishes for subexponential sequences if the growth of tower heights remains below certain growth rates, and obtain a flexibility result for polynomial sequences at this critical threshold. 
\end{abstract}

\maketitle

\section{Introduction}

The classification problem of measure-preserving systems has had a long history in the field of ergodic theory. A foundational concept is the Kolmogorov-Sinai entropy, a powerful measure-theoretic invariant that quantifies the complexity of a system by measuring the exponential rate of growth of distinguishable orbits. However, for systems with zero entropy, it offers no quantitative measure of complexity as it fails to detect subexponential system complexities.

More sensitive tools have since been developed to better understand the behaviour of zero entropy systems. One such quantity was sequence entropy, originally developed by Kushnirenko \cite{Kushnirenko67} to distinguish the horocyclic flow on two-dimensional hyperbolic surfaces from its Cartesian square. Since then, several fundamental properties of sequence entropy have been investigated, notably the relation between sequence entropy and mixing properties, see the recent survey on entropy-type invariants \cite{Kanigowski24}.

This article may be regarded as an addition to a series of works under the flexibility program of dynamical systems initiated by Anatole Katok \cite{BochiKatok22}. The general principle of flexibility suggests that dynamical invariants should take any arbitrary value, bar some well-understood obstruction innate to the invariant and the class of systems in consideration. Early results of this flavour primarily focused on classical invariants such as Lyapunov exponents of expanding maps on a circle \cite{Erchenko19} and Anosov diffeomorphisms \cite{BochiKatok22}. On the topic of Kolmogorov-Sinai entropy, flexibility results were obtained in geodesic flows \cite{ErchenkoKatok19}, piecewise expanding unimodal maps \cite{AlsedaMisiurewicz24} and Bowen-Series boundary maps \cite{AbramsKatok22}. A striking example appeared in \cite{KucherenkoQuas22} where an entire class of convex Lipschitz functions can be realised as topological pressure functions on continuous potentials, highlighting the versatility of the program.

Under this framework, a natural question about sequence entropy is as follows:

\begin{qn} \label{qn:seq_ent_flex}
    For which sequences $A=\{t_n\}_{n=1}^\infty$ do there exist a zero entropy system $(X,\cB,\mu,T)$ such that $h^A(T)$ takes any arbitrary value in $[0,+\infty]$?
\end{qn}

To investigate this problem, we focus on a general class of well-understood zero entropy systems that can serve as a rich source of examples and counterexamples. \emph{Rank one systems}, which made its way to the core of ergodic-theoretic constructions due to Ornstein \cite{Ornstein72}, perfectly fulfil this role. A measure-preserving system $(X,\cB,\mu,T)$ is said to be of rank one if there exists an increasing sequence of positive integers $h_n$ and a descending sequence of measurable subsets $F_n$ such that the partitions
\[\xi_n=\left\{F_n,TF_n,\cdots,T^{h_n-1}F_n,X \setminus \bigcup_{k=0}^{h_n-1}T^kF_n\right\}\]
form a sequence of refinements approaching the point partition $\veps$. The linear structure of a rank one system drastically simplifies the behaviour of $A$-orbits $\{T^{t_k}x\}$, yet still allows for highly complex behaviour over sufficiently sparse sequences of orbit lengths.
    
% Main Theorems

We will provide a semi-constructive proof that first answers in the positive the existence of zero entropy systems with infinite sequence entropy using rank one systems with tower heights $h_n$ admitting an unrestricted rate of growth. This result will apply to any sequence which \emph{dilates to infinity}, namely those that satisfy $\lim_{n \to \infty} (t_{n+1}-t_{n}) = +\infty$. The significance of enlarging gap sizes was already discovered in \cite{Ryzhikov21} where it was shown that for such sequences, infinite sequence entropy is in fact a generic phenomenon among automorphisms of a standard probability space. These results answer \cite{Kanigowski24}*{Question 6.4.3} in greater generality for sequence entropies along polynomial sequences (provided that they are non-linear, see the discussion leading up to Question~\ref{qn:Krug-Newton}).

\begin{thm} \label{seq_ent_blowup}
    Given any sequence of non-negative integers $A$ that dilates to infinity, there is a rank one system $(X,\cB,\mu,T)$ such that $h^A(T) = +\infty$.
\end{thm}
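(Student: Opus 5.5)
We plan to build $T$ as a cutting-and-stacking rank one system whose parameters are chosen inductively and depend on $A$. The goal is that for every $N$ there is a partition $P$ with at most $2^N$ atoms (or a fixed two-set partition with its refinements) such that $h^A(T,P) \ge c N$ with $c>0$ absolute. Then $h^A(T)=\sup_P h^A(T,P)=+\infty$.

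At stage $n$ we have a tower of height $h_n$ with base $F_n$. We cut it into $q_n$ columns and insert spacers $s_{n,1},\dots,s_{n,q_n}$ between the columns, where the spacer counts are drawn at random (Ornstein's probabilistic construction) from a large range $\{0,1,\dots,L_n\}$. The dilation hypothesis $t_{k+1}-t_k\to\infty$ supplies a window of indices $k\in[K_n, K_n+m_n)$ on which all gaps exceed $2(h_n+L_n)$. We then pick $h_{n+1}$ much larger than $t_{K_n+m_n}$. In the resulting stage-$(n+1)$ tower, the points $T^{t_k}x$ for $k$ in the window fall into distinct stage-$n$ columns, and the relative positions of these columns are shifted by partial sums of independent spacer variables. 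Using the partition $\xi_n$ (or its coarsening to about $2^N$ levels), the itinerary $(\xi_n(T^{t_k}x))_{k}$ is therefore governed by essentially independent, nearly uniform random variables. This is because a level in the column reached at time $t_k$ is $(\text{position}+t_k+\text{random spacer sum}) \bmod$ (column length), which is nearly equidistributed over $h_n$ levels once $L_n\gg h_n$.

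The estimate then runs as follows. For a random choice of spacers, the law of large numbers or a Chernoff bound shows that with high probability the empirical distribution of $m_n$-blocks of the itinerary is close to the product of near-uniform distributions. This gives
\[\frac1{m_n} H\Bigl(\bigvee_{k=K_n}^{K_n+m_n-1} T^{-t_k}\xi_N\Bigr) \ge \tfrac12 \log |\xi_N|\]
for every fixed $N\le n$, up to errors from the spacer mass and the boundary levels. We choose the sums $\sum_n L_nq_n/h_{n+1}$ to be finite, which keeps the measure finite, and we renormalise. We fix a realisation satisfying the estimate at every stage, which is possible by Borel--Cantelli since the failure probabilities are summable. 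We also need the initial segment $k<K_n$, which is harmless by subadditivity: entropy of a join over a longer index set dominates that over a sub-window. Arranging $m_n\ge \tfrac12 (K_n+m_n)$ makes the window a positive proportion, so the $\limsup$ defining $h^A(T,\xi_N)$ is at least $\tfrac14\log|\xi_N|\to\infty$. Since the partitions $\xi_n$ generate, this yields the claim.

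We expect the main obstacle to be the independence step: showing that the itinerary along $t_k$ within one stage genuinely behaves like independent uniform symbols despite deterministic correlations from the stacking, and that this persists in the final system rather than only at stage $n+1$. Later stages cut the tower further, but they preserve the stage-$(n+1)$ names on most of the space, so the estimate passes to the limit. The first approach we would try is a second-moment computation over the random spacers: bound the probability that two distinct base points share the same $\xi_N$-name along the window, and then convert the collision bound into a lower bound on Shannon entropy via the Rényi-2 entropy inequality.
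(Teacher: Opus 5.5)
\textbf{There is a genuine gap in the equidistribution step, which is the heart of your argument.} You randomise the phase of $T^{t_k}x$ by drawing spacers from $\{0,\dots,L_n\}$ with $L_n\gg h_n$, and you only require the gaps to exceed $2(h_n+L_n)$. Consecutive sample times are then separated by just one or two independent spacers.

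This cannot coexist with a finite invariant measure. Since $h_{n+1}\le q_nh_n+(q_n-1)L_n<q_n(h_n+L_n)$, each term $L_nq_n/h_{n+1}$ exceeds $L_n/(h_n+L_n)\approx 1$. So the series you want to be finite diverges. Equivalently, $\mu(S_{n+1})/\mu(S_n)=h_{n+1}/(q_nh_n)\approx 1+L_n/(2h_n)$, and the product of these ratios diverges.

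Worse, even at a single stage most of $S_{n+1}$ consists of the new spacer levels. So $T^{t_k}x$ lands in $S_n^c$ with conditional probability about $L_n/(2h_n+L_n)\to 1$. The $\xi_n$-symbols, and hence the $\xi_N$-symbols, are then concentrated on the spacer symbol rather than nearly uniform over $h_n$ levels, and the per-symbol entropy you claim collapses. Making the spacers rarely nonzero does not help: with high probability no randomness would then be inserted between two sample times only a couple of columns apart.

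\textbf{The fix is to put the randomness into the number of columns crossed, not into the size of a single spacer.} Keep the spacers bounded. The paper uses $a_{n,i}\in\{0,1\}$, so the spacer mass added at stage $n$ is at most about $1/h_n$ and the measure stays finite. Then use the dilation hypothesis to push the window so far out that every gap $t_{k+1}-t_k$ exceeds a mixing time $s_n$ depending on $h_n$. Between consecutive sample times the orbit now crosses very many columns of lengths $H$ and $H+1$. The renewal recursion $p_s=\tfrac12(p_{s-H}+p_{s-H-1})$, governed by an irreducible aperiodic chain, shows that the conditional probability of each symbol at a far-away time is eventually below $1/h$. This is Proposition~\ref{Bernoulli_pushforward} in the paper.

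With that in hand, the rest of your outline goes through. Index starting points by (column, position within the column), so that window names are $m$-dependent in the column index. Then either:
\begin{enumerate}
\item use Hoeffding plus a union bound over all words, as the paper does with $q_n\approx m\,h_n^{3N_n}$; or
\item use your collision bound via R\'enyi-2 entropy. This needs only an expectation and the observation that near-diagonal pairs form a fraction $\lesssim t_{K_n+m_n}/h_{n+1}$, so again $q_n$ must be exponentially large in $m_n$.
\end{enumerate}

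Two minor points. Passing from the window to $[1,K_n+m_n)$ uses monotonicity of entropy under refinement, not subadditivity. Borel--Cantelli is unnecessary, since the stages are chosen sequentially and you only need positive success probability at each one.
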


\begin{rem}
    A variant of sequence entropy, called $P$-entropy, was under consideration in \cite{Ryzhikov21}. The result can be applied directly to our case of Kushnirenko's sequence entropy by regrouping a sequence into portions with an expanding number of terms and widening gaps.
\end{rem}

\begin{rem} We would like to acknowledge that Dou and Park have also independently established an affirmative answer to \cite{Kanigowski24}*{Question 6.4.3} in their forthcoming work, utilising deterministic walks in random scenes.
\end{rem}

% \begin{rem}
%     In the proof of Theorem~\ref{seq_ent_blowup} we show that the following is true: for any integer $h \ge 2$, there is a rank one system $(X,\cB,\mu,T)$ such that the sequence of $(h_n+1)$-set partitions $\xi_{S_n}$ induced by its towers satisfies $h_1=h$ and
%     \[h^A(T,\xi_{S_n}) \ge \log h_n\]
%     for all $n$. Of course, this would imply that
%     \[h^A(T) = \sup_\xi h^A(T,\xi) = +\infty.\]
% \end{rem}

Theorem~\ref{seq_ent_blowup} may be generalised slightly to accommodate sequences which contain a large subsequence that dilates to infinity. More specifically, let $J = \{n_k\} \subseteq \NN$ be a strictly increasing sequence. We say that the sequence $A=\{t_n\}$ \emph{dilates to infinity on $J$} if the subsequence $A_J=\{t_{n_k}\}$ dilates to infinity. By a large subsequence $J \subseteq \NN$ we mean a subsequence of positive lower density $\underline{d}(J)$.

% \begin{cor} \label{seq_ent_flex_cor}
%     A rank one system $(X,\cB,\mu,T)$ with arbitrarily large sequence entropy $h^A(T)$ exists if $A=\{t_n\}$ is instead a sequence that dilates to infinity on $J \subseteq \NN$ with positive lower density.
% \end{cor}

\begin{cor} \label{seq_ent_blowup.2}
Let $A$ be a sequence of positive integers that dilates to infinity on a subsequence of positive lower density, then there exists a rank one system $(X,\cB,\mu,T)$ such that $h^A(T)=+\infty$.
\end{cor}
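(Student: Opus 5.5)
The plan is to reduce Corollary~\ref{seq_ent_blowup.2} to Theorem~\ref{seq_ent_blowup}. We apply the theorem to the subsequence $A_J=\{t_{n_k}\}$, which dilates to infinity. We then compare the sequence entropy along $A$ with that along $A_J$ using the positive lower density of $J$.

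Recall that for a finite partition $\xi$,
\[
h^A(T,\xi)=\limsup_{n\to\infty}\frac1n H\Bigl(\bigvee_{i=1}^{n}T^{-t_i}\xi\Bigr).
\]
The first step is to fix the rank one system given by Theorem~\ref{seq_ent_blowup} for the sequence $A_J$, so that $h^{A_J}(T)=+\infty$. If $A_J$ contains repetitions or non-positive terms, we first pass to a further subsequence. Removing finitely many terms does not change the density of $J$, and discarding repetitions is harmless because a subsequence that dilates to infinity is eventually strictly increasing.

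The second step is monotonicity under adding partition elements. Let $N(n)=\#\{k: n_k\le n\}$. The join $\bigvee_{i\le n}T^{-t_i}\xi$ refines $\bigvee_{k\le N(n)}T^{-t_{n_k}}\xi$, so
\[
\frac1n H\Bigl(\bigvee_{i\le n}T^{-t_i}\xi\Bigr)\ \ge\ \frac{N(n)}{n}\cdot\frac{1}{N(n)}H\Bigl(\bigvee_{k\le N(n)}T^{-t_{n_k}}\xi\Bigr).
\]
Positive lower density gives $N(n)/n\ge \delta>0$ for all large $n$, where $\delta=\underline{d}(J)/2$.

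The third step is taking limsups, and this is the only genuinely delicate point. We need to choose $n$ along the indices realising the limsup for $A_J$. Let $M_j$ be indices with $\frac{1}{M_j}H(\bigvee_{k\le M_j}T^{-t_{n_k}}\xi)\to h^{A_J}(T,\xi)$, and set $n=n_{M_j}$, so that $N(n)=M_j$. Then $N(n)/n\ge\delta$ for large $j$, and hence $h^A(T,\xi)\ge \delta\, h^{A_J}(T,\xi)$. Taking the supremum over $\xi$ gives $h^A(T)\ge\delta\, h^{A_J}(T)=+\infty$.

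No real obstacle is expected. The one thing to check is that the definition of sequence entropy in use is a $\limsup$ (Kushnirenko's), not a $\liminf$; the choice $n=n_{M_j}$ above handles the $\limsup$ correctly.
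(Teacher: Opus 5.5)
Your proposal is correct and follows essentially the same route as the paper. You bound the full join from below by the join over $J\cap[1,n]$, factor out $|J\cap[1,n]|/n$ using positive lower density, and apply Theorem~\ref{seq_ent_blowup} to $A_J$; your explicit choice $n=n_{M_j}$ (with $\underline{d}(J)/2$ in place of $\underline{d}(J)$) just spells out the paper's one-line $\varliminf\cdot\varlimsup$ step.
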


In \cite{Krug72}, a formula for sequence entropy is given by $h^A(T) = K(A) \cdot h(T)$ where
\[K(A) = \lim_{r \to \infty} \varlimsup_{n \to \infty} \frac{1}{n}\#\set{t_i+j}{1 \le i \le n, -r \le j \le r}\]
detects the size of gaps between successive terms in $A = \{t_n\}$ as $n$ tends to infinity. In cases where $K(A)<+\infty$ such as in linear sequences $A=\{an+b\}$, this forces $h^A(T)=0$ for all zero entropy systems. In light of Corollary \ref{seq_ent_blowup.2} one may ask the following:

\begin{qn} \label{qn:Krug-Newton}
    Is the condition $K(A) = +\infty$ equivalent to $A$ dilating to infinity on a subsequence of positive lower density? If not, for a sequence $A$ that does not satisfy the latter, does there exist a measure-preserving system $(X,\cB,\mu,T)$ such that $h^A(T)>0$?
\end{qn}

\medskip

The more intricate part of Question~\ref{qn:seq_ent_flex} lies in the positive but finite case. We will first show that for subexponential but superlinear sequences, there is a lowest threshold for the rate of growth of heights between each subsequent tower in a rank one system, below which necessarily gives rise to zero sequence entropy.

\begin{thm} \label{zero_seq_ent}
Let $(X,\mathcal{B},\mu,T)$ be a rank one system of finite measure approximated by a sequence of towers $S_n$ with base $F_n$ and height $h_n$.
\begin{enumerate}[label=(\roman*)]
    \item \label{zero_seq_ent.1} Let $A=\{t_n\}$ be a subexponential sequence. If \[\varlimsup_{n \to \infty}\frac{\log h_{n+1}}{\log h_{n}} < \infty,\] then $h^A(T)=0$.
    \item \label{zero_seq_ent.2} Let $A=\{t_n\}$ where $t_n = O(n^\alpha)$ for some $\alpha>1$. If there exists $\beta \in (0,\frac{1}{\alpha-1})$ such that \[\varlimsup_{n \to \infty}\frac{\log h_{n+1}}{h_{n}^{\beta}}<\infty,\] then $h^A(T) = 0$.
    \item \label{zero_seq_ent.3} Let $A=\{t_n\}$ where $t_n = \lfloor Cn(\log n)^\alpha\rfloor$ for some $C,\alpha>0$. If there exists $\beta \in (0,\frac{1}{\alpha})$ such that \[\varlimsup_{n \to \infty}\frac{\log h_{n+1}}{e^{h_{n}^{\beta}}}<\infty,\] then $h^A(T) = 0$.
\end{enumerate}
\end{thm}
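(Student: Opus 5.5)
Since the tower partitions $\xi_n$ increase to the point partition, we have $h^A(T)=\lim_n h^A(T,\xi_n)$. It therefore suffices to show that $h^A(T,\xi_n)=0$ for every fixed $n$. Since every finite partition is approximated by a partition measurable with respect to some $\xi_n$, we may also reduce to partitions $P$ into unions of levels of a tower $S_m$ together with the complement. For such a $P$ we bound $H\bigl(\bigvee_{i=1}^N T^{-t_i}P\bigr)$ by a counting argument. Fix a large $N$ and choose the index $k$ (depending on $N$) with $h_k$ comparable to the orbit length $t_N$, say $h_{k}\le t_N< h_{k+1}$.

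Off a set of small measure, consisting of the top $t_N$ levels of $S_{k+1}$ together with the complement of the tower, a point $x$ lies in some level $j$ of $S_{k+1}$ with $j+t_N<h_{k+1}$. For such $x$ the whole $A$-name $(P(T^{t_i}x))_{i\le N}$ is determined by $j$ alone, because $P$ is $\xi_{k+1}$-measurable once $k+1\ge m$. Hence the join restricted to the good set has at most $h_{k+1}$ atoms. The bad set has measure about $t_N/h_{k+1}$ plus the measure of the complement of $S_{k+1}$. Its contribution to the entropy is at most roughly $\mu(\text{bad})\cdot N\log|P| + H(\text{good/bad})$. We get
\[
\frac1N H\Bigl(\bigvee_{i=1}^N T^{-t_i}P\Bigr)\lesssim \frac{\log h_{k+1}}{N}+\frac{t_N}{h_{k+1}}\log|P|+\mu(X\setminus S_{k+1})\log|P|+\frac{\log 2}{N}.
\]
The bad-set term is too crude as stated. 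The better choice is to pick $k$ so that $h_k\gg t_N$, making $t_N/h_k\to0$, and then use the tower $S_k$ itself. The cost is the term $\log h_k/N$, so we need $\log h_k=o(N)$ where $h_k$ is the first height exceeding $t_N$ (times a slowly growing factor).

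This is where the hypotheses enter. Each case should follow by taking the minimal $k$ with $h_k\ge \omega(N)\,t_N$ for a slowly divergent $\omega$. Then $h_{k-1}<\omega t_N$, and the growth condition converts a bound on $h_{k-1}$ into a bound on $\log h_k$. In case (i), $\log h_k\le C\log h_{k-1}\le C\log(\omega t_N)=o(N)$ by subexponentiality. In case (ii), $\log h_k\le C h_{k-1}^\beta\le C(\omega N^\alpha)^\beta$. This is $o(N)$ provided $\alpha\beta<1$, which is where I expect the stated range $\beta<1/(\alpha-1)$ to demand a sharper argument. The first bound $\log h_{k+1}/N$ can be improved: the name is determined by $j$ only modulo the pattern of which $t_i$ land in distinct levels. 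More precisely, the number of distinct names is at most the number of distinct sets $\{j+t_i\}$ restricted to crossings of level boundaries of $S_{k-1}$ copies. Counting via the spacer structure of $S_k$ inside $S_{k-1}$ gives roughly $N\cdot(\text{gaps})$ choices, and one only needs $h_{k-1}\gtrsim t_{N}/N\approx N^{\alpha-1}$, yielding $N^{(\alpha-1)\beta}=o(N)$. Case (iii) is analogous: $t_N/N\approx(\log N)^\alpha$, so $h_{k-1}\approx(\log N)^\alpha$ and $\log h_k\le Ce^{h_{k-1}^\beta}=e^{(\log N)^{\alpha\beta}}=o(N)$ since $\alpha\beta<1$.

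The main obstacle is this refined counting step. The $A$-name should be determined by the positions of roughly $N$ consecutive points $t_i$ relative to the level structure of $S_{k-1}$ at scale $h_{k-1}\approx t_N/N$, that is, at the average gap. Inside one copy of $S_{k-1}$, gaps smaller than $h_{k-1}$ land in at most a few blocks. The name is then a function of the starting level in $S_{k-1}$ (at most $h_{k-1}$ choices), together with which of the $\le N$ copies of $S_{k-1}$ (or spacers) in the $S_k$ decomposition the points visit. Bounding that data by $\exp(o(N))+h_k^{O(1)}$ should close the argument. I would first attempt this by a direct count over the spacer sequences in the cutting-and-stacking description relating $S_{k-1}$ to $S_k$.
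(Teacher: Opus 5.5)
Your reduction to tower-measurable partitions and your argument for part (i) are sound. With $k$ minimal such that $h_k\ge\omega(N)t_N$, the name on the good set is determined by the level, the bad set has measure $o(1)$, and $\log h_k\le C\log(\omega t_N)=o(N)$. This is the paper's argument with $c_N=1$, except that the paper counts words on all of $X$ rather than discarding a bad set.

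For (ii) and (iii), however, there is a genuine gap. The crude bound only gives (ii) when $\alpha\beta<1$. It gives nothing in (iii), where $h_{k-1}$ can be as large as $t_N\approx N(\log N)^\alpha$. Everything beyond that rests on the ``refined counting step'' you leave open, and that step is the whole content of the theorem.

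Your sketch of that step also aims at the wrong scale. If you count at scale $t_N/N$, the window meets up to about $N$ copies of the tower. The only a priori information on the level at which each new copy is first sampled is that it lies among the first $s_N=\max_{i<N}(t_{i+1}-t_i)$ levels. That data contributes a factor $(s_N+1)^{N}$, so $\frac1N\log|\mathcal V|$ is not forced to vanish, since $s_N\to\infty$. Reading the copy pattern off the position inside $S_k$ (your $h_k^{O(1)}$ term) does not help either: it needs $h_k\gtrsim t_N$, which returns you to the crude bound.

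The missing idea is a count that ignores the stacking pattern entirely. Fix $c_N$ and let $n$ be minimal with $h_n>t_N/c_N$. An orbit segment of length $t_N$ meets at most $c_N+1$ copies of $S_n$, each entered at its base and separated by runs in $S_n^c$. So every $\phi_{\xi_n,A,N}$-word has the form $v_00^{b_1}v_1\cdots0^{b_{c_N}}v_{c_N}$, where:
each block is determined by its first symbol and its starting index;
the first symbol of $v_0$ takes at most $h_n+1$ values;
the first symbol of each later $v_j$ lies in $\{1,\dots,s_N\}$, because the previous sample point lies before that copy;
and $b_1+\dots+b_{c_N}\le N$.

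Hence $|\mathcal V|\le\binom{N+c_N}{N}(h_n+1)(s_N+1)^{c_N}$. Minimality of $n$ and the growth hypothesis give $\log h_n\le M\varphi(h_{n-1})\le M\varphi(t_N/c_N)$, with $\varphi(x)=x^\beta$ or $e^{x^\beta}$. Together with Propositions~\ref{slow_upper_bound} and \ref{disjoint_upper_bound}, this yields $h^A(T)=0$ as soon as $c_N\log s_N=o(N)$ and $\varphi(t_N/c_N)=o(N)$.

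So the right scale is $t_N/c_N$ with $c_N=o(N/\log s_N)$, not $t_N/N$. The hypotheses $(\alpha-1)\beta<1$ and $\alpha\beta<1$ are exactly what leave room for such a $c_N$: for instance $c_N=\lfloor N^{(\alpha-1/\beta+1)/2}\rfloor$ in (ii) and $c_N=\lfloor N/(\log\log N)^2\rfloor$ in (iii).
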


Theorem~\ref{zero_seq_ent} relies on upper bounds from a combinatorial counting of all possible orbits with respect to a carefully chosen sequence of optimal towers based on orbit length. The bounds for $\alpha$ in \ref{zero_seq_ent.2} and \ref{zero_seq_ent.3} are sharp, as formulated in Theorem~\ref{pos_seq_ent} below.

\begin{thm} \label{pos_seq_ent}
    There is a rank one system $(X,\cB,\mu,T)$ with sequence of tower heights $h_n$ such that $h^A(T)>0$, in addition to either one of the following constraints:
    \begin{enumerate}[label=(\roman*)]
        \item \label{pos_seq_ent.1}  For some $\alpha, \beta > 0$ where $\alpha \ge 1+1/\beta$, $A=\lfloor n^\alpha \rfloor$ and
        \[\varlimsup_{n \to \infty}\frac{\log h_{n+1}}{h_{n}^{\beta}}<\infty;\]
        \item \label{pos_seq_ent.2} For some $\alpha,\beta > 0$ where $\alpha \ge 1/\beta$, $A=\lfloor n(\log n)^\alpha \rfloor$ and
        \[\varlimsup_{n \to \infty}\frac{\log h_{n+1}}{e^{h_{n}^{\beta}}}<\infty.\]
    \end{enumerate}
\end{thm}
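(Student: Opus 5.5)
We construct the systems by cutting and stacking, with spacer patterns chosen so that, along the $A$-orbit, a fixed partition keeps producing new information at a uniform linear rate. Fix the two-set partition $\xi=\{F_1\text{-levels in some fixed subset},\ \text{rest}\}$; equivalently, take the partition induced by the first tower $S_1$. The aim is a lower bound $H\bigl(\bigvee_{i=1}^{n}T^{-t_i}\xi\bigr)\ge cn$ for infinitely many $n$, with $c>0$ independent of $n$. That bound gives $h^A(T,\xi)\ge c>0$.

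\emph{Choice of orbit window.} In case \ref{pos_seq_ent.1}, fix the stage $n$ and consider indices $i$ with $t_i=\lfloor i^\alpha\rfloor$ in a range where consecutive gaps $t_{i+1}-t_i\approx \alpha i^{\alpha-1}$ exceed $h_n$. This starts at $i_0\approx h_n^{1/(\alpha-1)}$. We need about $N\approx h_n^{1/(\alpha-1)}$ such indices, and their times reach $t_{2i_0}\approx h_n^{\alpha/(\alpha-1)}$. The condition $\alpha\ge 1+1/\beta$ means $\alpha/(\alpha-1)\le 1+\beta$, so these times are at most $h_n^{1+\beta}$. The tower $S_{n+1}$ is then built by stacking $q_n$ copies of $S_n$ with spacers, where $\log h_{n+1}\lesssim h_n^\beta$ is allowed. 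This leaves room for $q_n\approx e^{c h_n^{\beta}}$ columns, each with spacer counts chosen from a set of size about $h_n$.

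\emph{Independence of spacer sequences.} We choose the spacer sequences $s_{n,j}$, for $j<q_n$, so that for a point in copy $j$ the levels visited at times $t_i$, for $i_0\le i\le 2i_0$, are essentially free. The positions $T^{t_i}x$ land in copies $j+k_i$, with offsets determined by $t_i$ modulo the partial sums of spacers. Since each gap exceeds $h_n$, distinct $t_i$ fall in distinct copies. Varying the spacers therefore varies the landing levels independently.

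We then take the $s_{n,j}$ to be a de Bruijn-type or random sequence over $\{0,\dots,h_n\}$ such that the $q_n$ windows realize at least $e^{cN}$ distinct itineraries, each with comparable measure. This is Ornstein's probabilistic argument: random choices give many distinct itineraries with high probability. It requires $q_n\ge e^{cN}$, and with $N\approx h_n^{1/(\alpha-1)}\le h_n^\beta$ this fits the budget $\log q_n\lesssim h_n^\beta$. That gives entropy of order $N$ from $N$ terms at each stage. Across stages, the total number of terms considered must be comparable to the entropy gained. Starting each window at index $i_0$ and using $2i_0$ terms loses only a factor of $2$, so $\varlimsup \frac1n H\ge c$.

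Case \ref{pos_seq_ent.2} is identical with the gaps $t_{i+1}-t_i\approx C(\log i)^\alpha$. These exceed $h_n$ once $i\ge i_0=e^{(h_n/C)^{1/\alpha}}$, so $N\approx i_0$ and $\log N\approx h_n^{1/\alpha}\le h_n^\beta$ by $\alpha\ge1/\beta$. This matches $\log\log h_{n+1}\lesssim h_n^\beta$.

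\emph{Main obstacle.} The hardest step is the independence claim: showing that the finitely many coordinates $T^{t_i}x$ read off genuinely distinct, roughly equidistributed levels of $S_n$. The difficulty is that $t_i$ modulo the copy structure depends on cumulative spacer sums. I would first handle this by making spacers within each block i.i.d. uniform and applying a second-moment or Chernoff bound to count the distinct $\xi$-names. The measure of spacer levels added at each stage must also stay summable to keep the measure finite; this is arranged by letting the spacer proportion be $O(1/h_n)$ relative to $q_nh_n$, which holds because spacers are at most $h_n$ per copy.
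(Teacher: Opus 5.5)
Your proposal has a genuine gap in the spacer mechanism, and fixing it removes the independence argument your plan rests on.

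\textbf{The measure is infinite.} You draw the spacer counts $s_{n,j}$ from a set of size about $h_n$ (i.i.d.\ uniform on $\{0,\dots,h_n\}$ in your last paragraph). You then assert that the added spacer proportion is $O(1/h_n)$ ``because spacers are at most $h_n$ per copy''. That is off by a factor of $h_n$. With mean spacer $h_n/2$ per copy you get $a_n\approx q_nh_n/2$, hence $h_{n+1}\approx\tfrac32 q_nh_n$ and $a_n/h_{n+1}\approx 1/3$ at every stage. So $\sum_n a_n/h_{n+1}=\infty$ and $\mu(S_{n+1})\approx\tfrac32\mu(S_n)$. The resulting system has infinite measure: it is not a rank one system in the sense of Definition~\ref{rank_one_system}, and $h^A$ is not defined.

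\textbf{Bounded spacers break your independence claim.} A proportion $O(1/h_n)$ needs $O(1)$ spacers per copy on average. At the critical exponents ($\alpha=1+1/\beta$, resp.\ $\alpha=1/\beta$), the gaps $t_{i+1}-t_i$ in your window $i_0\le i\le 2i_0$ exceed $h_n$ only by a bounded factor. Each gap therefore crosses only $O(1)$ spacer slots. With bounded spacers, the landing level of $T^{t_{i+1}}x$ in $S_n$ is then confined to $O(1)$ positions determined by the previous one. The ``essentially free, roughly equidistributed'' landing levels you need in your main-obstacle paragraph cannot occur. Pushing the window out until bounded random spacers spread the offset over $h_n$ levels would need on the order of $h_n^2$ crossed slots per gap. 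That forces $\log q_n$ far beyond the allowed budget $h_n^\beta$ (resp.\ $e^{h_n^\beta}$).

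\textbf{The missing idea.} You do not need equidistribution over the levels of $S_n$, only that the coarse symbol is non-degenerate. This is the paper's approach:
\begin{itemize}
\item Take $h_1=2$, $\xi=\xi_{S_1}$, and all spacers in $\{0,1\}$. Then consecutive symbols in the $\xi_{S_1}$-name of every $S_n$ are distinct.
\item Hence a single fresh binary spacer crossed between two sample times already gives each symbol conditional probability at most $1/2$.
\item Let $N^{(0)}\approx h_n^\beta$ (resp.\ $e^{h_n^\beta+1}$) be the index beyond which gaps exceed $h_n+1$. Every $\phi_{\xi,A,N}$-word then has probability at most $2^{-(N-N^{(0)})}$.
\item Apply Hoeffding's inequality for $m$-dependent variables (Proposition~\ref{m-Hoeffding}) over $q_n=2^{N^{(0)}}$ copies, with $N=\tfrac32N^{(0)}$. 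This bounds word measures by roughly $2^{-(1/3-\veps)N}$, so $h^A(T,\xi)\ge\tfrac13\log 2$.
\item Meanwhile $\log h_{n+1}\approx N^{(0)}\log 2$, which meets the growth constraint.
\end{itemize}
The rest of your outline matches the paper's structure and goes through once the spacer mechanism is replaced this way. That includes starting the window where gaps first exceed $h_n$, accepting a bounded loss from discarding the initial terms, taking $q_n$ exponential in the window length, using concentration over copies, and keeping orbit length negligible relative to $h_{n+1}$.
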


Finally, we provide a partial resolution to Question~\ref{qn:seq_ent_flex} for polynomial sequences of the form $t_n=\lfloor n^\alpha \rfloor$. These systems have positive but finite sequence entropy and are therefore atypical in the sense of \cite{Ryzhikov21}, yet are difficult to construct explicitly due to their harsh demand on both the growth rate of tower heights and the complexity of spacer levels. The critical thresholds on tower heights mark a ``Goldilocks zone'' for such systems to appear in relative abundance.

\begin{thm} \label{weak_flex_seq_ent}
    Let $\beta > 0$ and $t_n=\lfloor n^\alpha \rfloor$ where $\alpha = 1+1/\beta$.
    \begin{enumerate}[label=(\roman*)]
        \item \label{small_flex} For any $m > 0$, there is a rank one system $(X,\cB,\mu,T)$ such that $0<h^A(T)<m$;
        \item \label{large_flex} For any integer $L \ge 2$, there is a rank one system $(X,\cB,\mu,T)$ such that \[C_1 \log L \le h^A(T) \le C_2 \log L,\]
        where $C_1=\frac{\beta}{1+\beta}, C_2=(2\beta)^{\frac{1}{1+\beta}}$ depends only on $\beta$.
    \end{enumerate}
\end{thm}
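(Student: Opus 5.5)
The plan is to run the proofs of Theorem~\ref{zero_seq_ent}\ref{zero_seq_ent.2} and Theorem~\ref{pos_seq_ent}\ref{pos_seq_ent.1} quantitatively at the critical exponent $\alpha=1+1/\beta$. The upper bound should come from the same combinatorial orbit count, keeping track of constants. The lower bound should come from the same probabilistic (Ornstein-type) spacer construction, with an alphabet parameter $L$ controlling how many independent choices are made per stage.

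\textbf{Construction.} We build $T$ by cutting and stacking with heights chosen so that $\log h_{n+1}\asymp h_n^{\beta}$. At stage $n$ we cut the tower $S_n$ into columns. Each column is followed by a spacer block whose length is drawn from a set of $L$ admissible values; for part~\ref{small_flex} we instead use two values chosen with a biased, low-entropy distribution. The choices are made independently, as in Ornstein's random construction, and a Borel--Cantelli argument fixes a realisation with the desired typical statistics. The number of copies is $q_n\approx h_{n+1}/h_n$, so $\log q_n\approx h_n^\beta$.

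\textbf{Lower bound.} The key heuristic is the count of sequence times landing in the window $[0,h_{n+1})$. Since $t_k=\lfloor k^\alpha\rfloor$, there are about $h_{n+1}^{1/\alpha}$ such times. Their gaps are about $k^{\alpha-1}$, and these exceed $h_n$ once $k\gtrsim h_n^{1/(\alpha-1)}=h_n^{\beta}$. Thus after roughly $h_n^\beta$ initial terms, consecutive sequence points fall into different copies of $S_n$. They therefore read independent spacer choices, each contributing about $\log L$ bits with respect to the partition $\xi_n$.

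We choose the number $N$ of sequence terms so that the block $[0,t_N]$ sits inside the $(n+1)$-tower, so that $t_N\lesssim h_{n+1}$. Counting distinguishable $\xi_n$-names along $t_1,\dots,t_N$ then gives
\[H\Big(\bigvee_{k\le N}T^{-t_k}\xi_n\Big)\ \gtrsim\ (N-h_n^\beta)\,\log L.\]
The critical choice $\log h_{n+1}\asymp h_n^\beta$ makes the number of ``free'' indices a fixed fraction of $N$. I expect optimising this fraction over the admissible window to produce $C_1=\beta/(1+\beta)$: the proportion of indices beyond the threshold is $1-1/\alpha=1/(1+\beta)$ of the exponent scale, up to the factor $\beta$. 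Positivity in part~\ref{small_flex} follows the same way, with $\log L$ replaced by the small entropy of the biased choice.

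\textbf{Upper bound.} We redo the counting of Theorem~\ref{zero_seq_ent}\ref{zero_seq_ent.2} at the threshold. For a finite partition approximated by $\xi_n$, the $A$-name of a point over $t_1,\dots,t_N$ is determined by four pieces of data:
\begin{enumerate}[label=(\alph*)]
    \item its position in the tower $S_m$ of optimal height for $t_N$;
    \item the pattern of early indices, where gaps are below $h_n$ and the name is determined by a single position, costing $O(\log h_m)$;
    \item the spacer choices visited by the late indices, at most $L$ options each;
    \item the stage index $m$, bounded because $h_m$ grows at the prescribed rate.
\end{enumerate}
Hence
\[\log\#\{\text{names}\}\ \le\ O(\log h_m)+\#\{\text{late indices}\}\cdot\log L,\]
and dividing by $N$ gives $h^A(T)\le C_2\log L$. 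The constant $(2\beta)^{1/(1+\beta)}$ should appear from balancing $N^{1/\alpha}$-type counts against $\log h_{n+1}\le c\,h_n^\beta$ when optimising over the stage $n$. For part~\ref{small_flex}, the same count with the low-entropy choice gives $h^A(T)<m$.

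\textbf{Main obstacle.} The main difficulty is the lower bound: showing that names at different sequence times really are independent and distinguishable. This requires controlling the boundary effects when a sequence time falls in a spacer or near a column edge, and ensuring that the random realisation fixed once works for all sufficiently large $n$ simultaneously. I would handle this with large-deviation estimates for the i.i.d.\ spacer choices, followed by Borel--Cantelli over $n$.
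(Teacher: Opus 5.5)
There is a genuine gap, and it is in the lower bound. You claim $H(\bigvee_{k\le N}T^{-t_k}\xi_n)\gtrsim (N-h_n^\beta)\log L$ on a window with $t_N\lesssim h_{n+1}$, treating each late reading as fresh randomness. But once the realization is fixed, the name of a point along $t_1,\dots,t_N$ is determined by its level in $S_{n+1}$ together with at most one spacer value from a later stage. So there are at most about $Lh_{n+1}$ names, and $H\le\log h_{n+1}+O(\log L)\asymp h_n^\beta$. For $N\gg h_n^\beta$, which your window allows up to $N\approx h_{n+1}^{1/\alpha}$, your bound is false. Your remark that the free indices form ``a fixed fraction of $N$'' tacitly assumes $N\asymp\log h_{n+1}$ without accounting for this cap.

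The cap is the missing ingredient. In the paper it enters through the Hoeffding deviation $\delta=\kappa^{-(1/2-\veps)h_n^\beta}$ with $q_n=\lfloor\kappa^{h_n^\beta}\rfloor+1$. This gives $h^A(T)\ge\alpha^\beta\log\min\{L^{\alpha^{-1-\beta}},\kappa^{1/2}\}$, and $C_1,C_2$ only emerge after tying the growth constant to $L$ via $\kappa=L^{2\alpha^{-1-\beta}}$. Since you never fix the constant in $\log h_{n+1}\asymp h_n^\beta$ relative to $L$, neither constant can come out of your argument; your account of $C_1$ is reverse-engineered.

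In the paper, $C_1=1/\alpha$ has a different source. It takes $N_n\approx(h_n/\alpha)^\beta$ so that \emph{all} gaps are below $h_n$. It then measures against the fixed finite partition $\xi'$, the skyscraper refinement of $\xi_{S_1}$, which is finite because spacers are at most $L-1$. The $\xi'$-names then determine every one of the roughly $t_{N_n}/h_n\approx N_n/\alpha$ spacer values traversed, so each name has probability about $L^{-N_n/\alpha}$. The early indices you discard as ``determined by a single position'' are exactly where the information lies. Also, a lower bound measured against $\xi_n$, a partition that changes with $n$, says nothing about $h^A(T,\xi)$ for any fixed $\xi$.

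The upper bound and part (i) have related problems. Allowing at most $L$ options per late index undercounts the names. Two samples separated by $g$ copies of the reference tower see a sum of $g$ spacer values, not one. The paper instead charges $\log L$ per copy traversed and chooses $\tau(N)=\min\{n:h_n\ge t_N/(\lambda N)\}$, so that only about $\lambda N$ copies are crossed. This gives $h^A(T)\le\lambda^{-\beta}\log\kappa+\lambda\log L$. Here the position term $\log h_{\tau(N)}$ is of order $N$ and is exactly what is balanced against $\lambda N\log L$.

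Finally, biasing a two-valued spacer distribution does not reduce this topological count, so ``the same count gives $h^A(T)<m$'' does not follow. In the paper, smallness comes from slowing the tower growth instead. Taking $L=2$ and $\kappa\downarrow1$ makes $\inf_\lambda(\lambda^{-\beta}\log\kappa+\lambda\log2)\to0$, while the lower bound $\tfrac{\alpha^\beta}{2}\log\kappa$ stays positive.
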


Note that we have $C_1 < C_2$ for any given $\beta \in (0,+\infty)$. Part~\ref{large_flex} implies that for large values of $M>0$, there exists zero entropy systems that achieve a sequence entropy of $M$ up to a multiplicative factor of around $\sqrt{C_2/C_1}$, which has an upper bound of $\sqrt{3}$ when $\beta=0.5$ and converges to 1 as $\beta \to \infty$. This may serve as heuristic evidence for the full flexibility result of sequence entropy along superlinear sequences, at least for sequences with some regularity.

\subsection*{Structure of the paper:} In Section \ref{sec:definitions} we provide a few definitions and fix some notations. In Section \ref{sec:lower_bounds_prob} and \ref{sec:generic_stacks} we lay the groundwork for Section \ref{sec:seq_ent_blowup} where we prove the blow-up result of sequence entropy by randomizing the cut-and-stack procedure of a rank one system. Section \ref{sec:zero_seq_ent} investigates when sequence entropy necessarily vanishes, and Section~\ref{sec:weak_seq_ent_flex} establishes the partial result on the flexibility of sequence entropy along polynomial sequences.

\subsection*{Acknowledgement} This work is supported by the Singapore International Graduate Award (SINGA) Scholarship from the National University of Singapore funded by the Ministry of Education. The author would like to thank Daren Wei for fruitful discussions and his warm encouragement throughout the writing of this paper. The author is grateful to Valerii V. Ryzhikov for his helpful comments on recent developments in this field.

\section{Preliminaries} \label{sec:definitions}

\subsection{Sequence entropy and symbolic dynamics}

Let $(X,\cB,\mu,T)$ be a measure-preserving system and let $\xi=\{X_1,\cdots,X_m\}$ be a measurable partition. The Shannon entropy of $\xi$ is given by
\[H(\xi)=\sum_{i=1}^mf(\mu(X_i))\]
where $f(0)=0$ and $f(x)=-x \log x$ for $x \in (0,+\infty)$. Recall that for any strictly increasing sequence of non-negative integers $A=\{t_n\}$, the sequence entropy of $T$ with respect to $\xi$ along $A$ is defined by
\[h^A(T,\xi) = \varlimsup_{n \to \infty} \frac{1}{n}H\left(\bigvee_{k=1}^n T^{-t_k}\xi\right).\]

Any finite partition $\xi=\{X_1,\cdots,X_m\}$ of a measure-preserving system $(X,\cB,\mu,T)$ can be equipped with a \textit{coding function} $\phi_\xi : X \to \Sigma = \{1,\cdots,m\}$ so that $\phi_\xi(x)=i$ if and only if $x \in X_i$. Given any increasing sequence of non-negative integers $A=(t_n)_{n=1}^\infty$, we define the \textit{sequence coding map} $\phi_{\xi,A,n}:X \to \Sigma^n$ where \[\phi_{\xi,A,n}(x)=\phi_\xi(T^{t_1}x) \cdots \phi_\xi(T^{t_n}x).\] In particular, $\phi_{\xi,n}$ denotes the default coding map where we set $t_n=n-1$. The string $\phi_\xi(T^{t_1}x) \cdots \phi_\xi(T^{t_n}x)$ is called a $\phi_{\xi,A,n}$\textit{-word}, or a $\phi_{\xi,A}$\textit{-word of length} $n$ in $X$.

A partition $\xi'=\{X_1',\cdots,X_l'\}$ is a refinement of $\xi$ if for each $X_i' \in \xi'$ there exists $X_j \in \xi$ such that $X_i' \subseteq X_j$. Given any refinement $\xi'$ of $\xi$, $\phi_{\xi}$ is well-defined as a function on $\xi'$.

\subsection{Constructive geometric definition of rank one systems:} \label{ssec:cg}

This paper adopts the constructive geometric definition of rank one systems (with spacers) given in \cite{Ferenczi97}. Recall that a (\emph{Rokhlin}) \emph{tower} in $X$ is a disjoint union of \textit{level sets} $S = \bigsqcup_{k=0}^{h-1}T^kF$ with \textit{base} $F \in \mathcal{B}$ and \textit{height} $h \in \ZZ_{\ge 0}$, which naturally induces a partition $\xi_S = \{F,TF,\cdots,T^{h-1}F,S^c\}$ of $X$.

\begin{defn}\label{rank_one_system}
    A measure-preserving system $(X,\cB,\mu,T)$ is a \emph{rank one system} if it satisfies the following properties:
    \begin{enumerate}
        \item There exists $q_n \in \NN$ for all $n \in \NN$, $a_{n,i} \in \ZZ_{\ge 0}$ for all $n,i \in \NN$ where $1 \le i \le q_n-1$, such that the sum $\sum_{n=1}^\infty a_n/h_{n+1}$ converges,
        where $a_n=\sum_{i=1}^{q_n-1}a_{n,i}$ and $h_n$ is the sequence defined by $h_0=1$ and $h_{n+1}=q_nh_n+a_n$;
        \item There exists measurable subsets $F_n$ for all $n \in \NN$, $F_{n,i}$ for all $n \in \NN, 1 \le i \le q_n$, and $C_{n,i,j}$ for all $n \in \NN, 1 \le i \le q_n-1, 1 \le j \le a_i$, such that
        \begin{enumerate}
            \item For any given $n \in \NN$, $\set{T^kF_n}{0 \le k \le h_n-1}$ is pairwise disjoint;
            \item $\set{F_{n,i}}{1 \le i \le q_n-1}$ forms a partition of $F_n$;
            \item For all $1 \le i \le q_n-1$, $T^{h_n}F_{n,i}=\begin{cases}
                C_{n,i,0} & (a_i>0); \\
                F_{n,i+1} & (a_i=0);
            \end{cases}$
            \item For all $1 \le i \le q_n-1$, $TC_{n,i,j}=\begin{cases}
                C_{n,i,j+1} & (j<a_i); \\
                F_{n,i+1} & (j=a_i);
            \end{cases}$
            \item $F_{n+1}=F_{n,1}$.
        \end{enumerate}
    \end{enumerate}
\end{defn}

\begin{rem}
The above definition of a rank one system is often referred to as the \emph{cut-and-stack procedure}. Informally, the procedure begins by fixing an initial tower $S_1=\bigsqcup_{k=0}^{h_1-1}T^kF_1$. This is followed by an indefinite iteration of a series of steps described below:
\begin{enumerate}
    \item Let $S_n=\bigsqcup_{k=0}^{h_n-1}T^kF_n$ be the $n$-th tower constructed in the cut-and stack procedure. For some $q_n \in \NN$, divide the base $F_n$ into $q_n$ disjoint subsets of equal measure, denoted by $F_{n,i}$ where $i=1,\cdots,q_n$.
    \item Fix a sequence of non-negative integers $a_{n,1},\cdots,a_{n,q_n} \in \ZZ_{\ge 0}$. Define a collection of sets $C_{n,i,j}$ for all integers $i,j$ that satisfies $1 \le i \le q_n-1$ and $0 \le j \le a_{n,i}-1$, so that $\mu(C_{n,i,j})=\mu(F_{n,i})$ for every such set.
    \item Define measure-preserving bijections between $F_{n,i}$ and $C_{n,i,j}$ such that they satisfy the required gluing properties in Definition \ref{rank_one_system}. The resulting construction is a tower $S_{n+1}$ with base $F_{n+1}=F_{n,1}$ and height $h_{n+1}=q_nh_n+a_n$.
\end{enumerate}
\end{rem}

The collection of integers $(q_n;a_{n,i})$ will sometimes be referred to as the \emph{stacking data} of the rank one system as these parameters determine the system up to measure-theoretic isomorphism. A different set of stacking data may give rise to the same system, for instance, by taking a subsequence $\{S_{n_k}\}$ of approximating towers.

\subsection{Refinement of induced partitions} \label{ssec:refinement_induced}

Let $S=\bigsqcup_{k=1}^{h-1}T^kF$ be a tower in a measure-preserving system $X$ with induced partition $\xi$ and coding function $\phi:X \to \Sigma = \{0,1,\cdots,h\}$ where $\phi(S^c)=0$ and $\phi(T^kF)=k+1$ for $k=0,\cdots,h-1$.

In practice, it will be helpful to have a coding map which distinguishes the spacer levels of a rank one system. Naturally, the Kakutani skyscraper with base $F$ induces a countable partition $\xi'$ that refines $\xi$:
\begin{multline*}
    \xi'=\left\{\left(\bigcup_{k=0}^\infty T^kF\right)^c,F,TF,\cdots,T^{h-1}F, \right. \\
    \left. T^hF \setminus \left(\bigcup_{k=0}^{h-1} T^kF\right), T^{h+1}F \setminus \left(\bigcup_{k=0}^{h} T^kF\right), \cdots\right\}.
\end{multline*}

One may then define the coding function $\phi':X \to \Sigma'=\NN_0$ which returns 0 outside the skyscraper, and $k+1$ on the $k$-th level of the skyscraper for $k=0,1,\cdots$. If $(X,\cB,\mu,T)$ is ergodic, the Kakutani skyscraper covers almost all points in $X$ and the skyscraper complement does not constitute an atom of $\xi'$. Furthermore, if $X$ is a rank one system with spacer levels $a_{n,i}$ uniformly bounded by $L$, then $\xi'$ is a finite partition with $h+L$ atoms.

In the constructive geometric definition of rank one systems as outlined in \ref{ssec:cg}, the coding map $\phi_{\xi'}$ of the natural refinement of the induced partition $\xi=\xi_{S_1}$ makes sense on each $S_n$ even when $(X,\cB,\mu,T)$ has not been fully defined. If $T^kF_n$ is a level set of $S_n$ where $0 \le k \le h_n-1$, we have $\phi_{\xi'}(T^kF_n)=k-k_0+1$ where $k_0 \le k$ is the largest integer that satisfies $T^{k_0}F_n \subseteq F_1$.

\subsection{Paper-specific notations}\label{ssec:notations}

We now fix the precise notations used throughout the rest of the paper. Let $S_n,h_n,F_n,q_n,a_{n,i}$ be as in Definition \ref{rank_one_system}, and let $\xi_n=\xi_{S_n}$ be the induced partitions of the towers. The coding function of $\xi_n$ is defined as the function $\phi_{\xi_n}:X \to \Sigma_n$, where $\Sigma_n:=\{0,1,\cdots,h_n\}$, $\phi_{\xi_n}(S_n^c)=0$ and $\phi_{\xi_n}(T^kF_n) = k+1$ for $k=0,\cdots,h_n-1$.

The induced partitions $\xi_n$ form a sequence of refinements that approach the point partition $\veps$. Given any $1 \le r \le n$, $\phi_{\xi_r}$ is well-defined as a function on the level sets $\{T^kF_n\}$ in $S_n$ and takes the value $0$ on $S_n^c$. Denote by $\Phi_r(S_n) := \phi_{\xi_r,h_n}(F_n)$ the $\phi_{\xi_r}$-word that encodes $S_n$. The cut-and-stack procedure that defines the rank one system can be described by the recursive relation
\begin{equation} \label{cut_and_stack_relation}
    \Phi_r(S_{n+1})=\Phi_r(S_n)0^{a_{n,1}}\Phi_r(S_n)0^{a_{n,2}} \cdots \Phi_r(S_n)0^{a_{n,q_n-1}}\Phi_r(S_n)
\end{equation}
where $1 \le r \le n$ and $0$ corresponds to the spacer levels in $S_{n+1} \setminus S_n$. Note that the number of spacer levels $a_{n,i}$ is independent of the choice of the coding function $\phi_{\xi_r}$.

% \medskip

% The ensuing calculations on sequence entropy rely on restrictions into the towers that gradually exhausts a rank one system. We will give a few additional definitions that cater to our needs.

% \begin{defn} \label{vocab_approx}
%     Let $S = \bigsqcup_{k=0}^{h-1} T^kF$, $A = \{t_n\}$, $n \in \NN$, and let $\xi$ be a partition refined by $\xi_S$. 
%     \begin{itemize}
%         \item The \emph{$\phi_{\xi,A,n}$-vocabulary of $S$} is the set $\cV$ of all $\phi_{\xi,A,n}$-words fully contained in $S$, i.e. \[\cV = \set{\phi_{\xi,A,n}(T^kF)}{0 \le k \le h-t_n-1}.\]
%         \item The \emph{approximate sequence entropy restricted on $S$}, denoted by $H_S$, is defined by \[H_S\left(\bigvee_{k=1}^n T^{-t_k}\xi\right) = \sum_{w \in \cV}f\left(\mu\phi_{\xi,A,n}^{-1}(w)\right)\] where $f(0)=0$ and $f(x)=-x\log x$ when $x > 0$.
%     \end{itemize}
% \end{defn}

\section{Lower bounds and the probabilistic method} \label{sec:lower_bounds_prob}

% In a rank one system with positive sequence entropy, the sequence of towers $S_n$ should allow for gaps between subsequent terms in $A$ to surpass the scale of $h_n$, and furthermore, allows sufficient time still for a rich variety of words of that scale to appear.

Let $A=\{t_n\}$ be a strictly increasing but subexponential sequence of non-negative integers. Let $S_n = \bigsqcup_{k=0}^{h_n-1} T^kF_n$ be a sequence of towers approximating the rank one system $(X,\cB,\mu,T)$ defined recursively via a cut-and-stack procedure described by Equation (\ref{cut_and_stack_relation}).

\medskip

Throughout Sections \ref{sec:lower_bounds_prob} to \ref{sec:seq_ent_blowup}, for simplicity we restrict the number of spacer levels $a_{n,i}$ to be either 0 or 1. Note that this guarantees \[\sum_{n=1}^\infty \frac{a_n}{h_{n+1}} \le \sum_{n=1}^\infty \frac{q_n}{q_nh_n} < \infty\] as required in Definition \ref{rank_one_system}.

\medskip

We first state a preliminary lower bound for the entropy calculated on a large disjoint union of small subsets:

\begin{prop} \label{disjoint_lower_bound}
    Let $X$ be a finite measure space and let $E=\bigsqcup_{i}E_i$ be a finite or countable disjoint union of measurable subsets such that $\mu(E_i) \le \veps\mu(X)$ for some $\veps>0$, then
    \[\sum_{i}f(\mu(E_i)) \ge -\mu(E) \log (\veps\mu(X)).\]
\end{prop}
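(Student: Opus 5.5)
The plan is to bound each term $f(\mu(E_i))$ from below by a linear function of $\mu(E_i)$, using that $-\log$ is decreasing, and then sum over $i$. For every $i$ with $\mu(E_i)>0$ we have $0<\mu(E_i)\le \veps\mu(X)$, hence $-\log \mu(E_i) \ge -\log(\veps\mu(X))$. Multiplying by $\mu(E_i)>0$ gives
\[
f(\mu(E_i)) = -\mu(E_i)\log\mu(E_i) \ge -\mu(E_i)\log(\veps\mu(X)).
\]
For indices with $\mu(E_i)=0$, both sides vanish because $f(0)=0$, so the inequality holds for every $i$.

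Next I sum over $i$. Every term $f(\mu(E_i))$ is a real number, and since $\log$ is monotone the right-hand sides all share the sign of $-\log(\veps\mu(X))$. If $\veps\mu(X)\le 1$, all terms on both sides are nonnegative. In that case summing a countable family of nonnegative inequalities is legitimate, possibly with the left side equal to $+\infty$. Countable additivity, $\sum_i\mu(E_i)=\mu(E)$, then gives
\[
\sum_i f(\mu(E_i)) \ge -\mu(E)\log(\veps\mu(X)).
\]

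The only point needing care is convergence when $\veps\mu(X)>1$. Then the right-hand side $-\mu(E)\log(\veps\mu(X))$ is finite and negative, and I want to sum the termwise bounds. Since $f(x)\ge -x\log(\veps\mu(X))$ termwise, the negative parts of $f(\mu(E_i))$ are dominated by the summable sequence $\mu(E_i)\log(\veps\mu(X))$. So $\sum_i f(\mu(E_i))$ is well defined in $(-\infty,+\infty]$, and the termwise inequalities sum to the claimed bound.

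This is the only mildly delicate step; in the intended applications one has $\veps\mu(X)\le 1$, where it is trivial. Otherwise the argument is just monotonicity of $\log$ together with additivity of $\mu$.
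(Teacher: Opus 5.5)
Your proof is correct and is essentially the paper's argument. The paper uses the concavity chord bound $f(x)\ge \frac{f(a)}{a}x$ on $[0,a]$ with $a=\veps\mu(X)$; since $f(a)/a=-\log a$, this is exactly your termwise inequality $f(\mu(E_i))\ge-\mu(E_i)\log(\veps\mu(X))$, derived there from concavity rather than monotonicity of $\log$, and your care in summing the countable family fills in a step the paper leaves implicit.
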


\begin{proof}
    This follows from the concavity bound $f(x) \ge \frac{f(a)}{a}x$ for $x \in [0,a]$ where we take $a=\veps \mu(X)$.
\end{proof}

To obtain a non-vanishing sequence entropy, Proposition \ref{disjoint_lower_bound} hints that, at each step from $S_n$ to $S_{n+1}$, a natural number $N_n$ and a sequence $a_{n,i}$ should be chosen in a way so that each $\phi_{\xi_rA,N_n}$-word $w$ does not constitute a large measure in $X$. The existence of such a sequence will be demonstrated by interpreting $a_{n,i}$ as a sequence of independent random variables with values in $\{0,1\}$ and showing that each word do not appear too frequently in generic cases when the number of slices $q_n$ is sufficiently large.

\medskip

We now prove the key probabilistic lemma used to control the measure of each distinguishable $A$-orbit. In essence, for $1 \le r \le n$, the $\phi_{\xi_r}$-word $\Phi_r(S_n)$ is composed of numerous repetitions of $\Phi_r(S_r)=12 \cdots h_r$ intermittently separated by spacer symbols $0$; all such tower-encoding words will be encompassed by the word $w$ in the statement and proof of this lemma.

\medskip

If we interpret $a_{n,i}$ in Equation (\ref{cut_and_stack_relation}) as independent Bernoulli trials, symbols that are far apart in $\Phi_r(S_{n+1})$ will be nearly independent to one another. The near independence will be proven by establishing a recursive relation that resembles a mixing Markov chain. This result will then be applicable to sufficiently long orbits as $A$ dilates to infinity.

\begin{prop}\label{Bernoulli_pushforward}
    Let $h \ge 2$, $\Sigma=\{1,\cdots,h\}$, $v=12 \cdots h \in \Sigma^h$ and \[w=v0^{b_1}v0^{b_2}\cdots v0^{b_{g-1}}v \in \Sigma^{H}\] where $H=gh+b$, $g \ge 1$, $b=\sum_{i=1}^{g-1}b_i$ and $b_i \in \{0,1\}$ for $i=1,\cdots,g-1$. Let $(\Omega,\cF,\PP)$ be the Bernoulli process equipped with the Bernoulli shift $\sigma$, and define the map $\iota:\Omega \to \Sigma^\NN$ where each $a=(a_i)_{i \in \NN} \in \Omega$ corresponds to an infinite string \[\iota(a)=w0^{a_1}w0^{a_2}\cdots \in \Sigma^\NN.\]
    \begin{enumerate}[label=(\roman*)]
        \item \label{Bernoulli_pushforward.1} For any $l_0,l_1 \in \Sigma$ and $n \in \NN$ where $\PP(\iota(a)_n=l_0)>0$, we have \[\lim_{s \to \infty}\PP(\iota(a)_{n+s}=l_1 \mid \iota(a)_n=l_0) < \frac{1}{h}.\]
        \item \label{Bernoulli_pushforward.2} There exists $s \in \NN$ such that for all $n_0,n_1,\cdots,n_N \in \NN$ such that $n_{i+1}-n_{i} \ge s$ for $i=0,1,\cdots,N-1$, we have \[\PP(\iota(a)_{n_0} \cdots \iota(a)_{n_N}=l_{0} \cdots l_{N}) \le \frac{1}{h^N}\] for any $l_0,l_1,\cdots,l_N \in \Sigma$.
    \end{enumerate}
\end{prop}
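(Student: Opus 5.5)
The plan is to encode the random string $\iota(a)$ as a function of a finite-state Markov chain and read both parts off its convergence to equilibrium. Write $p\in(0,1)$ for the success probability of the Bernoulli trials, so $p=1/2$ for the fair case. For each position $m\in\NN$ let $X_m\in\{0,1,\dots,H\}$ be the \emph{phase} of position $m$:
\begin{itemize}
\item $X_m=j$ with $0\le j\le H-1$ if position $m$ is the $(j+1)$-th letter of some copy of $w$;
\item $X_m=H$ if position $m$ is a spacer $0$ inserted by some $a_i=1$.
\end{itemize}
Then $\iota(a)_m=w_{X_m+1}$ when $X_m<H$, and $\iota(a)_m=0$ otherwise. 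Since the $a_i$ are independent, $(X_m)$ is a time-homogeneous Markov chain with $X_1=0$ and the following transitions:
\begin{itemize}
\item $j\mapsto j+1$ for $j<H-1$;
\item $H-1\mapsto 0$ with probability $1-p$, and $H-1\mapsto H$ with probability $p$;
\item $H\mapsto 0$.
\end{itemize}
Each visit to state $H-1$ consumes exactly one fresh coordinate $a_i$, which gives the Markov property. Moreover, the whole prefix $\iota(a)_1\cdots\iota(a)_m$ is a function of $X_1,\dots,X_m$.

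The chain is irreducible on $H+1$ states. It is aperiodic because it has return loops to $0$ of lengths $H$ and $H+1$, which are coprime. Hence it converges to its unique stationary distribution $\pi$ from every initial state, uniformly since the state space is finite. Solving the balance equations gives
\[
\pi(j)=\frac{1}{H+p}\quad (0\le j\le H-1),\qquad \pi(H)=\frac{p}{H+p}.
\]

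For part (i), each $l\in\Sigma=\{1,\dots,h\}$ occurs exactly $g$ times in $w$, once in each copy of $v$. Hence, for every initial state $x$,
\[
\lim_{s\to\infty}\PP\bigl(\iota(a)_{n+s}=l_1 \mid X_n=x\bigr)=\frac{g}{H+p}=\frac{g}{gh+b+p}<\frac1h,
\]
where the strict inequality holds because $b+p>0$. Conditioning on the event $\{\iota(a)_n=l_0\}$ is a mixture of the conditionings on $\{X_n=x\}$ over the phases $x$ with $w_{x+1}=l_0$. A finite convex combination of sequences with the same limit has that limit, which proves (i).

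For part (ii), let $\delta=\frac1h-\frac{g}{H+p}>0$. By uniform convergence over the finitely many initial states, we can choose $s$ so that
\[
\bigl|\PP(X_{m+s'}=y\mid X_m=x)-\pi(y)\bigr|\le \frac{\delta}{H+1}
\]
for all $s'\ge s$ and all states $x,y$. Summing over the $g$ phases $y$ with $w_{y+1}=l$ gives
\[
\PP\bigl(\iota(a)_{m+s'}=l\mid X_m=x\bigr)\le \frac1h\quad\text{for all } s'\ge s,\ x,\ l.
\]
Now induct on $N$. Because the event $\{\iota(a)_{n_0}\cdots\iota(a)_{n_{N-1}}=l_0\cdots l_{N-1}\}$ is determined by $X_1,\dots,X_{n_{N-1}}$, we can decompose according to the value of $X_{n_{N-1}}$ and apply the Markov property:
\[
\PP(l_0\cdots l_N)=\sum_x \PP\bigl(l_0\cdots l_{N-1},\,X_{n_{N-1}}=x\bigr)\,\PP\bigl(\iota(a)_{n_N}=l_N\mid X_{n_{N-1}}=x\bigr)\le \frac1h\,\PP(l_0\cdots l_{N-1}).
\]
Iterating down to the trivial bound $\PP(\iota(a)_{n_0}=l_0)\le1$ yields $\PP(l_0\cdots l_N)\le h^{-N}$.

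The main obstacle is setting up the phase chain so that the Markov property genuinely holds. The conditioning has to be on the full phase rather than on the observed symbol, and the past must be measurable with respect to the chain path. Aperiodicity is also essential: without it there would be no uniform convergence, and hence no single $s$ working for all $n_i$ simultaneously.
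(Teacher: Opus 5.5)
Your proof is correct. It uses the same mechanism as the paper's: an irreducible chain whose return loops have lengths $H$ and $H+1$, hence aperiodic, with stationary weight $1/(H+\frac12)$ on each of the $H$ positions of $w$. The implementation differs, though.

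The paper does not work with the phase process. It derives the renewal-type recursion $p_s=\frac12(p_{s-H}+p_{s-H-1})$ for the conditional probabilities and iterates it with an $(H+1)\times(H+1)$ companion matrix. It then bounds the limit by pairing the stationary vector with the initial window $(p_H,\dots,p_0)$, counting how many occurrences of a symbol fit in $H+1$ consecutive positions. Part (ii) is simply declared an easy consequence of (i).

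Working with the phase chain $X_m$ buys you two things.
\begin{itemize}
\item The limit comes out exactly as $g/(H+p)$, so $b=0$ needs no separate case. It also shows that the paper's normalisation $2H-1$ should read $2H+1$.
\item More importantly, (ii) is not a formal consequence of (i) as stated. Statement (i) conditions only on the single symbol $\iota(a)_n$ and gives a limit for each fixed $n$. The chain-rule argument for (ii) instead needs a bound conditional on the whole past $\iota(a)_{n_0}\cdots\iota(a)_{n_{N-1}}$, with one $s$ valid for every starting position. The symbol process is in general not Markov, so what is required is conditioning on the full phase $X_{n_{N-1}}$ and using uniform convergence over the finitely many initial states. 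Your write-up supplies exactly the step the paper leaves implicit.
\end{itemize}

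The same computation also gives the limit $(b+p)/(H+p)$ for the spacer symbol $0$. This is $<1/h$ because $(h-1)(b+p)<(h-1)g<gh$. That is the case Proposition~\ref{generic_stacks} actually needs when it applies (ii) with $l_i\in\{0,1,\dots,h_r\}$.
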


\begin{proof}
    It is easy to see that \ref{Bernoulli_pushforward.1} implies \ref{Bernoulli_pushforward.2}, thus we prove \ref{Bernoulli_pushforward.1}. Denote $\tilde{w}=\iota(a)$, then since $\tilde{w}_n=l_1$ if and only if $\tilde{w}_{n-l_1+1}=1$ for $l_1=2,\cdots,h$, it suffices to show that the limits exist and \[0<\lim_{s \to \infty}\PP(\tilde{w}_{n+s}=0 \mid \tilde{w}_n=l_0)<\frac{1}{h}\] for any $l_0 \in \Sigma$, as the limit for $l_1=1,\cdots,h$ would then be equal to some shared value strictly less than $\frac{1}{h}$. For any $s \ge H+1$, let $q : \Omega \to \NN$ be the random variable which corresponds to the last spacer level that determines $\tilde{w}_{n+s}$, i.e. $q=q(a)$ is the largest natural number that satisfies $qH+\sum_{i=1}^{q-1}a_i<n+s$. We have
    \begin{align*}
        & \PP(\tilde{w}_n=l_0,\tilde{w}_{n+s}=l_1) \\
        = & \frac{1}{2}\left(\PP(\tilde{w}_n=l_0,\tilde{w}_{n+s}=l_1 \mid a_q=0)+\PP(\tilde{w}_n=l_0,\tilde{w}_{n+s}=l_1 \mid a_q=1)\right) \\
        = & \frac{1}{2}\left(\PP(\tilde{w}_n=l_0,\tilde{w}_{n+s-H}=l_1)+\PP(\tilde{w}_n=l_0,\tilde{w}_{n+s-H-1}=l_1)\right)
    \end{align*}
    and therefore $p_s=\frac{1}{2}(p_{s-H}+p_{s-H-1})$ where $p_s:=\PP(\tilde{w}_{n+s}=l_1 \mid \tilde{w}_n=l_0)$. Hence,
    \[\begin{pmatrix}
        p_s \\ p_{s-1} \\ \vdots \\ p_{s-H}
    \end{pmatrix} = 
    A\begin{pmatrix}
        p_{s-1} \\ p_{s-2} \\ \vdots \\ p_{s-H-1}
    \end{pmatrix}\]
    for $s \ge H+1$, where
    \[A=\begin{pmatrix}
        & & \frac{1}{2} & \frac{1}{2} \\
        1 \\
        & \ddots \\
        & & 1
    \end{pmatrix}\]
    is irreducible and aperiodic (see \cite{PetersenErgodicTheory}, Theorem 2.5.6), hence the limit of $A^n$ exists and each row converges to the stationary distribution of $A$ given by \[\pi=\begin{pmatrix}\frac{2}{2H-1} & \cdots & \frac{2}{2H-1} & \frac{1}{2H-1}\end{pmatrix}.\] This shows that for each $l_1 \in \Sigma$, the limit of $p_s$ exists and \[\lim_{s \to \infty}p_s = \frac{2}{2H-1}\left(p_{H}+p_{H-1}+\cdots+p_1\right)+\frac{1}{2H-1}p_0>0.\]
    
    For $l_1=0$, note that as soon as the initial position $\tilde{w}_n$ is fixed, the word $\tilde{w}_n\tilde{w}_{n+1} \cdots \tilde{w}_{n+H}$ is determined by a single spacer level $a_{i}$, except for the case where $\tilde{w}_n$ itself coincides with a spacer level $0^{a_i}$ with $a_i=1$. In any case, regardless of the value of $a_i$, any two `0's can only appear in $\tilde{w}$ with at least $h$ symbols in between. Therefore, we have \[\begin{pmatrix}
        p_{H} \\ p_{H-1} \\ \cdots \\ p_0
    \end{pmatrix} = \frac{1}{2}\begin{pmatrix}
        p_{H}^{(0)} \\ p_{H-1}^{(0)} \\ \cdots \\ p_0^{(0)}
    \end{pmatrix}+\frac{1}{2}\begin{pmatrix}
        p_{H}^{(1)} \\ p_{H-1}^{(1)} \\ \cdots \\ p_0^{(1)}
    \end{pmatrix}\] where both vectors $(p_i^{(0)}),(p_i^{(1)})$ have entries in $\{0,1\}$ and each pair of `1's is separated by at least $h$ `0's. Multiplying by $\pi$ gives the limit \[\lim_{s \to \infty} p_s \le \frac{2}{2H-1}\left\lceil\frac{H+1}{h+1}\right\rceil \le \frac{2g}{2H-1}<\frac{1}{h},\] at least for $H=gh+b$ with $g \ge 2$ and $1 \le b \le g-1$. For the particular case where $b=0$ we instead have $\lim_{s \to \infty}p_s=1/(2H-1)<1/h$.
\end{proof}

\section{Genericness of high-complexity stacks} \label{sec:generic_stacks}

We now establish the main lemma, Proposition~\ref{generic_stacks}, which describes the genericness of high-complexity stacks with respect to a sequence $A=\{t_n\}$ that dilates to infinity. This will serve as the inductive step of the desired construction in the proof of Theorem~\ref{seq_ent_blowup}. The following version of Hoeffding's inequality for $m$-dependent variables will be used and can be found in his original paper \cite{Hoeffding63}.

\begin{prop}\label{m-Hoeffding}
    Let $\{X_i\}_{i \in \NN}$ be a sequence of $m$-dependent variables, i.e. the joint distributions of $\{X_i\}_{i \le k}$ and $\{X_i\}_{i > k+m}$ are independent for all $k\in\NN$. Suppose that $0 \le X_n \le 1$ almost surely for all $n$, and define $\bar{X}_n=\frac{1}{n}\sum_{k=1}^nX_k, \mu_n=\EE \bar{X}_n$. Then, for any $t > 0$, we have
    \[\PP(\bar{X}_n-\mu_n \ge t) \le \exp(-2\lfloor n/m \rfloor t^2).\]
\end{prop}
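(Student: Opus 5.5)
The plan is to reduce to the classical Hoeffding inequality for independent bounded variables. The reduction splits the sum into interleaved subsequences of mutually independent variables and then recombines them through convexity of the exponential inside a Chernoff bound. This is Hoeffding's own argument in \cite{Hoeffding63}.

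\textbf{Convention and splitting.} I will read $m$-dependence in Hoeffding's convention: any two families of indices separated by a gap of at least $m$ are independent. With the literal phrasing in the statement (gap $>m$), one replaces $m$ by $m+1$ throughout, which only changes the constant inside the floor. Partition $\{1,\dots,n\}$ into the $m$ residue classes $I_j=\{i\le n: i\equiv j \bmod m\}$ for $j=1,\dots,m$, and let $n_j=\#I_j$, so that $n_j\ge\lfloor n/m\rfloor$. Within a single class $I_j$ the variables are mutually independent. This follows by induction on the number of terms: at each step the block independence in the definition separates the last variable from all earlier ones, since consecutive indices in $I_j$ are $m$ apart.

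\textbf{Exponential moment bound.} Write
\[n(\bar X_n-\mu_n)=\sum_{j=1}^m n_j(\bar Y_j-\nu_j),\]
where $\bar Y_j$ is the average of $X_i$ over $i\in I_j$ and $\nu_j=\EE\bar Y_j$. For $\lambda>0$, set $p_j=n_j/n$, which are weights summing to $1$. Jensen's inequality for $\exp$ gives
\[\EE e^{\lambda n(\bar X_n-\mu_n)}\le\sum_j p_j\,\EE e^{\lambda n(\bar Y_j-\nu_j)}.\]
Each factor $\EE e^{\lambda n(\bar Y_j-\nu_j)}$ is a product over $n_j$ independent variables with values in $[0,1]$. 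Hoeffding's lemma $\EE e^{s(Z-\EE Z)}\le e^{s^2/8}$, applied with $s=\lambda n/n_j$, bounds this factor by
\[\exp\!\big(\lambda^2n^2/(8n_j)\big)\le\exp\!\big(\lambda^2n^2/(8n_*)\big),\]
where $n_*=\lfloor n/m\rfloor$.

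\textbf{Chernoff step.} Markov's inequality now yields
\[\PP(\bar X_n-\mu_n\ge t)\le\exp\!\big(-\lambda nt+\lambda^2n^2/(8n_*)\big).\]
Choosing $\lambda=4n_*t/n$ gives $\exp(-2n_*t^2)$, which is the claimed bound. The only delicate point is the mutual independence within each residue class, and in particular matching the exact gap convention so that spacing $m$ suffices. Everything else is the standard Hoeffding computation. If $n<m$, then $\lfloor n/m\rfloor=0$ and the bound is trivial.
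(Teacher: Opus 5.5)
Your proof is correct and is Hoeffding's own convexity argument from \cite{Hoeffding63}: residue classes mod $m$, then Jensen on the exponential, then Hoeffding's lemma, then the Chernoff step. The paper itself gives no proof and simply cites that reference, so your approach matches what it relies on. Your handling of the convention is also necessary, not cosmetic: under the literal definition in the statement (independence only across gaps exceeding $m$) the bound with $\lfloor n/m\rfloor$ is false, e.g.\ take $m=1$, $X_{2i-1}=X_{2i}=\xi_i$ with $(\xi_i)$ i.i.d.\ fair bits, $n=2p$ and $t=\tfrac12$, which gives probability $2^{-p}>e^{-p}$; the version you prove (independence across gaps $\ge m$ giving $\lfloor n/m\rfloor$) is exactly the one the paper uses in the proof of Proposition~\ref{generic_stacks}, where $(m-1)$-dependence yields $\exp(-2\lfloor q/m\rfloor\delta^2)$.
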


\subsection{Context and outline of proof}\label{ssec:generic_stacks_prep}

Recall the descriptions and notations related to the approximating towers described in Subsection~\ref{ssec:notations}. Suppose that $S_1,\cdots,S_n$ have already been defined via a cut-and-stack procedure described by Equation (\ref{cut_and_stack_relation}), with $a_{r,i} \in \{0,1\}$ for $1 \le r \le n-1$ and $1 \le i \le q_r-1$.

\medskip

As we hope to achieve a high sequence entropy for partitions induced by all of its towers, the $n$-th instance of the stacking data $(q_n;a_{n,i})$ will be obtained via a diagonal argument. Repeated iterations show that for $r=1,\cdots,n$, there exists $g(n,r) \in \NN$ and $b_{r,1},\cdots,b_{r,g(n,r)-1} \in \{0,1\}$ such that the $\phi_r$-word which encodes $S_n$ is given by
\[\Phi_r(S_n)=v_r0^{b_{r,1}}v_r0^{b_{r,2}} \cdots v_r0^{b_{r,g(n,r)-1}}v_r \in \Sigma_r^{h_n}\]
where $v_r=\Phi_r(S_r)=12 \cdots h_r$. A random binary sequence $a=(a_i)_{i \in \NN}$ can be used to define an infinite string
\[\iota_{n,r}(a)=\Phi_r(S_n)0^{a_1}\Phi_r(S_n)0^{a_2} \cdots \in \Sigma_r^\NN,\]
see Figure~\ref{fig:generic_stacks}. The subsequent tower, $S_{n+1}$, can then be constructed once the stacking data $(q_n;a_{n,i})$ are determined. The goal is to then show, under the sampling of all $A$-orbits of some fixed length $N_n$, that $S_{n+1}$ is sufficiently complex for at least one choice of the stacking data.

% Let $S_1,\cdots,S_n$ be a sequence of approximating towers in $(X,\cB,\mu,T)$ where {\color{red}$S_i=\bigsqcup_{k=0}^{h_i-1}T^kF_i$.} Denote $\xi_i=\xi_{S_i},\Sigma_i=\{0,1,\cdots,h_i\}, \phi_i=\phi_{S_i}:X \to \Sigma_i$ for $1 \le i \le n$ and {\color{blue}$\Phi_r(S_i)=\phi_{\xi_r,h_i}(F_i) \in \Sigma_r^{h_i}$} for $1 \le r \le i \le n$. Suppose that the sequence of towers {\color{red}$S_1,\cdots,S_n$ is defined via a cut-and-stack procedure described by \[\Phi_r(S_{i+1})=\Phi_r(S_i) 0^{a_{i,1}}\Phi_r(S_i) 0^{a_{i,2}} \cdots \Phi_r(S_i) 0^{a_{i,q_i-1}}\Phi_r(S_i)\] for $1 \le r \le i \le n-1$ where $a_{i,j} \in \{0,1\}$.}

\begin{figure}[htp!]
\centering
\begin{tikzpicture}
\foreach \x in {0,2,6}
    {
    \foreach \y in {0,0.5,1.5}
        \draw (0,\x+\y) rectangle (1,\x+\y+0.3);
    \foreach \y in {0.4,0.9,1.4}
        \draw[red] (0,\x+\y) -- (1,\x+\y);
    \draw (0.5,\x+1.25) node {$\vdots$};
    }
\foreach \x in {1.9,3.9,5.9}
    \draw[red] (0,\x) -- (1,\x);

\foreach \x in {0,2,6}
    \draw (1.5,\x+0) rectangle (2.5,\x+1.8);
\foreach \x in {1.9,3.9,5.9}
    \draw[red] (1.5,\x) -- (2.5,\x);

\draw (0.5,5) node {$\vdots$};
\draw (2,5) node {$\vdots$};
\draw (3,3.9) node {$\cdots$};
\draw (3.5,0) rectangle (4.5,7.8);

\foreach \x in {0,3,6}
    {
    \foreach \y in {7,8.5,10.5}
        \draw (\y,\x) rectangle (\y+1,\x+2.6);
    \draw (10,\x+1.3) node {$\cdots$};
    \draw[red,dashed] (6.5,\x+2.8) -- (12,\x+2.8);
    }
\draw (9.25,9.25) node {$\vdots$};

\draw[dotted] (-0.3,-0.8) rectangle (4.8,8.1);
\draw[dotted] (6.7,-0.1) rectangle (11.8,2.7);
\draw[dotted] (4.8,1.3) -- (6.7,1.3);

\draw (0.5,0) node[anchor=north] {$\Phi_1(S_n)$};
\draw (2,0) node[anchor=north] {$\Phi_2(S_n)$};
\draw (4,0) node[anchor=north] {$\Phi_n(S_n)$};
\draw (7.5,0) node[anchor=north] {$\iota_{n,1}(a)$};
\draw (9,0) node[anchor=north] {$\iota_{n,2}(a)$};
\draw (11,0) node[anchor=north] {$\iota_{n,n}(a)$};
\end{tikzpicture}
\caption{Construction of random sequences, to be read bottom-up. Each box in $\Phi_r(S_n)$ refers to the word $\Phi_r(S_r)=12 \cdots h_r$. Red lines correspond to spacer symbols $0$. Solid red lines represent the already determined $0^{b_{r,i}}$ levels and may or may not correspond to an existent spacer symbol. Dashed red lines represent the randomised levels $0^{a_i}$.}
\label{fig:generic_stacks}
\end{figure}
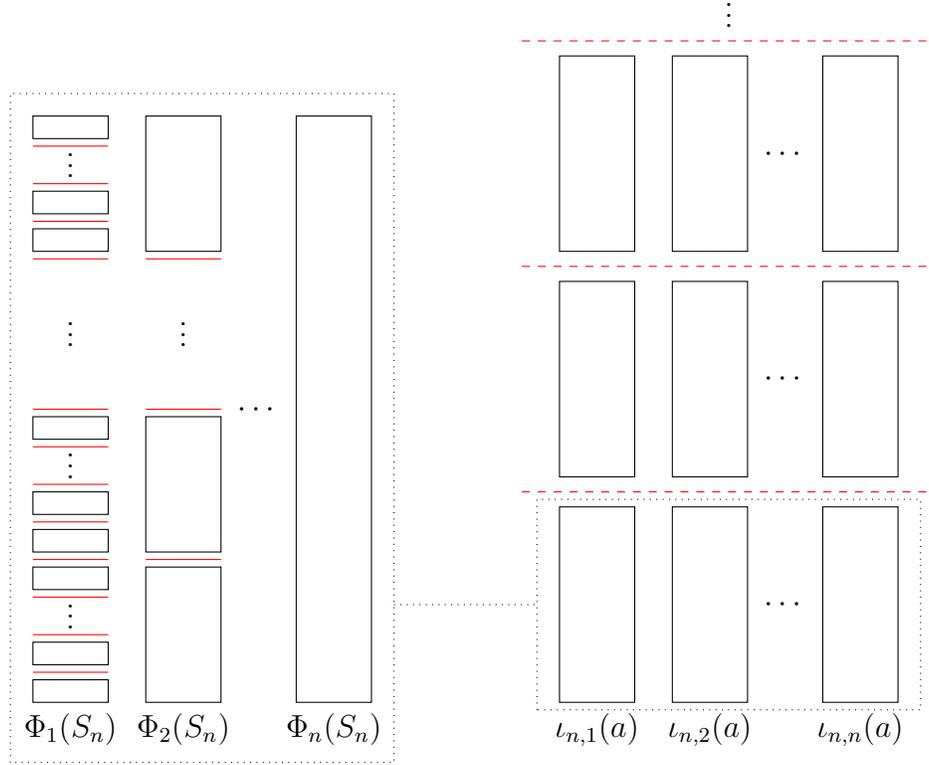

\medskip

Figure \ref{fig:hierarchy} illustrates the key parameters used in the ensuing argument. We first choose the sampling length $N_n$ large enough so that two conditions are met simultaneously:
\begin{enumerate}
    \item the gaps between each term in $A=\{t_n\}$ grow large enough for consecutive symbols to be nearly independent; and
    \item $N_n$ satisfies an inequality of the form $P(N_n)<1$, where $P(N)$ is independent of $q_n$ and $P(N) \to 0$ as $N \to \infty$.
\end{enumerate}

The number of slices $q_n$ is then chosen so that the probability of obtaining an exceedingly ordered sequence of spacings $a_1,\cdots,a_{q_n-1}$ is below $P(N_n)$ by Hoeffding's inequality. The scale of $q_n$ will turn out to be at least exponential in $N_n$, so that the topmost $t_{N_n}$ level sets with $A$-orbits that escape $S_{n+1}$ are negligible.

\medskip

We now shift our focus to the precise measurement of sequence entropy. In principle, one would need to consider the frequency of each word $v \in \Sigma_r^{N_n}$ appearing as a $\phi_{\xi_r,A,N_n}$-word for level sets in $S_{n+1}$, which is well-defined as long as the orbit stays within the tower. Denote by $K_n$ the set of indices corresponding to the \emph{non-spacer} levels $T^kF_{n+1}$ such that the $A$-orbit $\{T^{k+t_i}F_{n+1}\}_{i=1}^{N_n}$ stays in $S_{n+1}$; formally, we have
\[K_n = \set{k \in \ZZ}{0 \le k \le h_{n+1}-t_{N_n}-1,T^kF_{n+1} \subseteq S_{n}}.\]
The avoidance of spacer levels will simplify the proof of Proposition~\ref{generic_stacks} and is insignificant even without the assumption $a_{n,i} \in \{0,1\}$: the number of spacer levels $a_n = \sum_{i=1}^{q_n-1} a_{n,i}$ satisfies that $a_n/h_{n+1} \to 0$ since $\sum_{n=1}^\infty a_n/h_{n+1}$ converges as required in Definition~\ref{rank_one_system}.

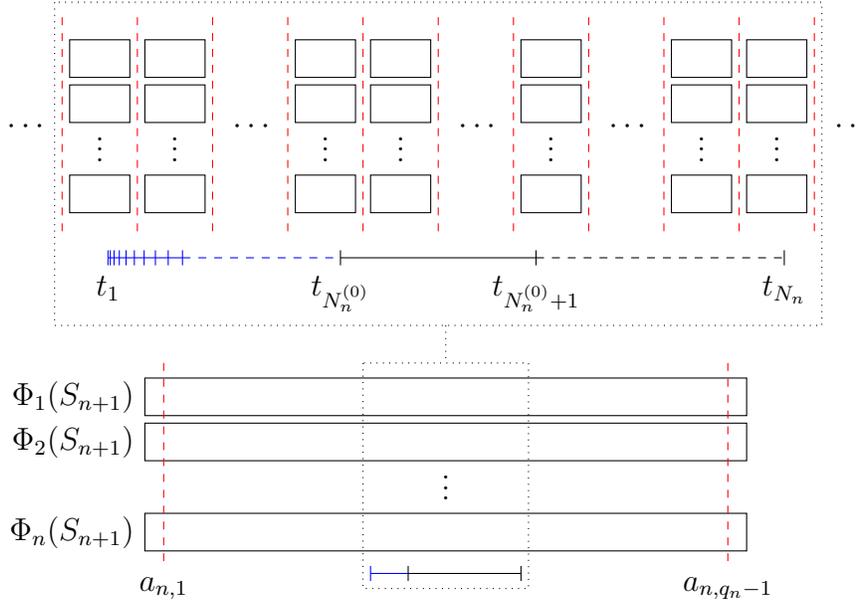
\begin{figure}
\centering
\begin{tikzpicture}
\foreach \x in {1,2,4,5,7,9,10}
    {
    \foreach \y in {0,-0.6,-1.8}
        \draw (\x,\y) rectangle (\x+0.8,\y-0.5);
    \draw (\x+0.4,-1.35) node[anchor=center] {$\vdots$};
    }
\foreach \x in {0,3,6,8,11}
    \draw (\x+0.45,-1.15) node {$\cdots$};
\foreach \x in {1,...,11}
    \draw[red,dashed] (\x-0.1,0.3) -- (\x-0.1,-2.6);

\foreach \x [evaluate = \x as \y using \x^2] in {0.1,0.2,...,1}
    \draw[blue] (1.5+\y,-2.8) -- (1.5+\y,-3);
\foreach \x in {4.6,7.2,10.5}
    \draw (\x,-2.8) -- (\x,-3);
\draw[blue] (1.51,-2.9) -- (2.5,-2.9);
\draw[blue,dashed] (2.5,-2.9) -- (4.6,-2.9);
\draw (4.6,-2.9) -- (7.2,-2.9);
\draw[dashed] (7.2,-2.9) -- (10.5,-2.9);
\draw (1.51,-3) node[anchor=north] {$t_1$};
\draw (4.6,-3) node[anchor=north] {$t_{N_n^{(0)}}$};
\draw (7.2,-3) node[anchor=north] {$t_{N_n^{(0)}+1}$};
\draw (10.5,-3) node[anchor=north] {$t_{N_n}$};

\foreach \x in {-4.5,-5.1,-6.3}
    \draw (2,\x) rectangle (10,\x-0.5);
\draw (6,-5.85) node[anchor=center] {$\vdots$};
\foreach \x in {2.25,9.75}
    \draw[red,dashed] (\x,-4.3) -- (\x,-7);

\draw[blue] (5,-7) -- (5,-7.2);
\draw (5.5,-7) -- (5.5,-7.2);
\draw (7,-7) -- (7,-7.2);
\draw[blue] (5,-7.1) -- (5.5,-7.1);
\draw (5.5,-7.1) -- (7,-7.1);

\draw (2.25,-7) node[anchor=north] {$a_{n,1}$};
\draw (9.75,-7) node[anchor=north] {$a_{n,q_n-1}$};
\draw (2,-4.75) node[anchor=east] {$\Phi_1(S_{n+1})$};
\draw (2,-5.35) node[anchor=east] {$\Phi_2(S_{n+1})$};
\draw (2,-6.55) node[anchor=east] {$\Phi_n(S_{n+1})$};

\draw[dotted] (0.8,0.5) rectangle (11.0,-3.8);
\draw[dotted] (4.9,-4.3) rectangle (7.1,-7.3);
\draw[dotted] (6,-3.8) -- (6,-4.3);
\end{tikzpicture}
\caption{The hierarchy of parameter sizes at each step of the cut-and-stack procedure. Blue hatch marks correspond to the initial symbols of an $A$-orbit that are too predictable, whereas black ones have sufficiently wide gaps in between for adequate mixing.}
\label{fig:hierarchy}
\end{figure}

\subsection{Statement and proof}
\begin{prop}\label{generic_stacks}
Under the framework of Subsection~\ref{ssec:generic_stacks_prep}, there exists $N_n^{(0)} \in \NN$ such that for all sufficiently large $N_n > N_n^{(0)}$, there exists $q_n \in \NN$ dependent on $N_n$ such that one can pick $a_{n,1},\cdots,a_{n,q_n-1} \in \{0,1\}$ for $S_{n+1}$ so that, for $1 \le r \le n$, every $v \in \Sigma_r^{N_n}$ is the $\phi_{\xi_r,A,N_n}$-word of no more than $2h_r^{-(N_n-N_n^{(0)})} \cdot |K_n|$ distinct non-spacer levels in $S_n$.
\end{prop}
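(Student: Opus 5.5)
We randomise the stacking data: take $a=(a_i)_{i\in\NN}$ i.i.d.\ fair Bernoulli, and for each $1\le r\le n$ regard $\Phi_r(S_{n+1})$ as the prefix of $\iota_{n,r}(a)$ obtained by truncating after the $q_n$-th copy of $\Phi_r(S_n)$. The word $\Phi_r(S_n)$ has exactly the form $w=v_r0^{b_{r,1}}\cdots v_r$ of Proposition~\ref{Bernoulli_pushforward}, with $h=h_r$. Apply Proposition~\ref{Bernoulli_pushforward}\ref{Bernoulli_pushforward.2} to each $r\le n$, and let $s$ be the maximum of the resulting gaps. Since $A$ dilates to infinity, there is $N_n^{(0)}$ with $t_{i+1}-t_i\ge s$ for all $i\ge N_n^{(0)}$.

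For $N_n>N_n^{(0)}$, any position $m$ and any $v\in\Sigma_r^{N_n}$ then satisfy
\[\PP\bigl(\iota_{n,r}(a)_{m+t_1}\cdots\iota_{n,r}(a)_{m+t_{N_n}}=v\bigr)\le \PP\bigl(\text{last } N_n-N_n^{(0)}+1 \text{ symbols agree}\bigr)\le h_r^{-(N_n-N_n^{(0)})}.\]
Consequently, for each fixed $v$, the expected number of non-spacer starting positions in $K_n$ whose $\phi_{\xi_r,A,N_n}$-word equals $v$ is at most $h_r^{-(N_n-N_n^{(0)})}|K_n|$. We restrict to non-spacer positions, so that the start of each orbit sits at a deterministic offset inside a copy of $\Phi_r(S_n)$. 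We also need $p_r(v):=\PP(\text{word}=v)$ to be uniform in the starting copy; by stationarity of the Bernoulli shift this holds up to the finitely many initial copies, which is what the limit statement of Proposition~\ref{Bernoulli_pushforward} supplies.

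\textbf{Concentration.} Group the positions in $K_n$ by the index $j$ of the copy of $\Phi_r(S_n)$ in which they start. Let $X_j$ be the fraction of the $h_n$ positions in copy $j$ whose word is $v$, so $0\le X_j\le 1$. The word starting in copy $j$ depends only on $a_j,\dots,a_{j+M}$ with $M\approx t_{N_n}/h_n+1$, so the $X_j$ are $M$-dependent. Proposition~\ref{m-Hoeffding} with deviation $t=h_r^{-(N_n-N_n^{(0)})}$ gives
\[\PP\bigl(\#\{\text{positions with word } v\}>2h_r^{-(N_n-N_n^{(0)})}|K_n|\bigr)\le \exp\bigl(-2\lfloor q'/M\rfloor h_r^{-2(N_n-N_n^{(0)})}\bigr),\]
where $q'\approx q_n-t_{N_n}/h_n$ is the number of complete copies used. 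We then take a union bound over all $r\le n$ and all $v\in\Sigma_r^{N_n}$; there are at most $n(h_n+1)^{N_n}$ such events. The total failure probability is below $1$ once
\[q_n\gg M\,h_n^{2N_n}\,N_n\log h_n,\]
which is the "exponential in $N_n$" choice of $q_n$ anticipated in the outline. Any realisation avoiding all bad events gives the required $a_{n,1},\dots,a_{n,q_n-1}$. Two losses must be absorbed: the top $t_{N_n}$ levels, which are excluded from $K_n$ anyway, and the $O(M)$ boundary copies. Both are negligible because $q_n\gg t_{N_n}/h_n$, and the factor $2$ in the statement leaves room for them.

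\textbf{Main obstacle.} The delicate step is the mean bound. Proposition~\ref{Bernoulli_pushforward}\ref{Bernoulli_pushforward.2} is stated for the process started from the first copy, whereas here we need it uniformly over starting offsets and for strings of $N_n$ symbols with gaps at least $s$ between the sampled positions. I would handle this by conditioning on the copy index and the offset and rerunning the Markov-chain recursion, which gives geometric convergence of $p_s$ uniformly in the initial state. The conditional bound $<1/h_r$ then holds for every gap $\ge s$ and every past, and chaining these conditional bounds yields the product estimate.
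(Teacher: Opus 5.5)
Your proposal is correct and is essentially the paper's proof: Bernoulli randomisation of the spacers, Proposition~\ref{Bernoulli_pushforward} together with dilation to fix $N_n^{(0)}$, the $m$-dependent Hoeffding bound, and a union bound with $q_n$ exponential in $N_n$. It differs only in that you average over the $h_n$ offsets inside each copy rather than running a separate Hoeffding estimate and union bound for each offset $j$, and your phase-conditioning argument makes rigorous the step ``(i) implies (ii)'' that the paper leaves as easy, though the mean bound itself already follows from (ii) as stated (arbitrary positions) plus stationarity. One bookkeeping fix: with deviation $t=h_r^{-(N_n-N_n^{(0)})}$ the factor $2$ is fully used up, so to absorb the boundary copies you need a smaller deviation such as $\tfrac12 h_r^{-(N_n-N_n^{(0)})}$ (the paper takes $\delta=h_r^{-N_n}$).
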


\begin{proof}
Let $(\Omega,\cF,\cP)$ be the Bernoulli process and for $r=1,\cdots,n$, define the map $\iota_{n,r}:\Omega \to \Sigma_r^\NN$ where each $a=(a_i)_{i \in \NN} \in \Omega$ corresponds to an infinite string \[\iota_{n,r}(a)=\Phi_r(S_n)0^{a_1}\Phi_r(S_n)0^{a_2} \cdots \in \Sigma_r^\NN.\]
By Proposition \ref{Bernoulli_pushforward}, for $r=1,\cdots,n$ there exists $s_{n,r} \in \NN$ such that for all $k_0, \cdots, k_N \in \NN$ with $k_{i+1}-k_i \ge s_{n,r}$ for $i=0,\cdots,N-1$, we have \[\PP(\iota_{n,r}(a)_{k_0} \cdots \iota_{n,r}(a)_{k_N}=l_0 \cdots l_N) \le \frac{1}{h_r^N}\]
for any $l_0,\cdots,l_N \in \Sigma_r$. Take $s_n=\max_{1 \le r \le n}\{s_{n,r}\}$ and pick $N_n^{(0)} \in \NN$ such that $t_{i+1}-t_i \ge s_n$ for all $i \ge N_n^{(0)}$, then for $N \ge N_n^{(0)}$, $1 \le r \le n$, $v \in \Sigma_r^N$, and $k,j \in \NN$ we have \[\PP(\iota_{n,r}(\sigma^ka)_{j+t_1} \cdots \iota_{n,r}(\sigma^ka)_{j+t_N} = l_1 \cdots l_N) \le \frac{1}{h_r^{N-N_n^{(0)}}}\]
for any $l_1,\cdots,l_N \in \Sigma_r$ (here $\sigma$ denotes the Bernoulli shift map).

For $1 \le r \le n$, $1 \le j \le h_n$ and $v \in \Sigma_r^N$, let $(X^{(r,j)}_k(v))_{k=1}^\infty$ be a sequence of random variables defined as indicator functions on $\Omega$ as follows:
\[X^{(r,j)}_k(v) = \begin{cases}
        1 & (\iota_{n,r}(\sigma^ka)_{j+t_1} \cdots \iota_{n,r}(\sigma^ka)_{j+t_N}=v), \\
        0 & (\text{otherwise}).
    \end{cases}\]

Let $m=\lfloor t_N/h_n \rfloor+2$ so that $h_nm > h_n+t_N$, then the sequence of random variables $(X^{(r,j)}_k(v))_{k=1}^\infty$ is $(m-1)$-dependent, thus by Hoeffding's Inequality (Proposition \ref{m-Hoeffding}) for any $\delta>0, q \in \NN$ we have
\[\PP(\{a \in \Omega: \bar{X}^{(r,j)}_q(v)-\EE \bar{X}^{(r,j)}_q(v) \ge \delta\}) \le \exp(-2\lfloor q/m \rfloor \delta^2)\]
where $\EE\bar{X}^{(r,j)}_q(v) \le h_r^{-(N-N_n^{(0)})}$ for all $q$. Take $\delta = h_r^{-N}, q \ge mh_r^{3N}$, then
\begin{equation} \label{generic_stacks_lemma}
    \PP(\{a \in \Omega: \bar{X}^{(r,j)}_q(v) \ge 2h_r^{-(N-N_n^{(0)})}\}) \le \exp(-2h_r^N).
\end{equation}

% Let $\{i_k\} \subseteq \NN$ be the (almost surely infinite) increasing sequence of indices that satisfies $a_{i_k}=1$, then the sequence $(Y^{(r)}_k(v))_{k=1}^\infty$ defined by
% \[Y^{(r)}_k(v)=\begin{cases}
%     1 & (t_1=0,0\iota_{n,r}(\sigma^{i_k}a)_{t_2} \cdots \iota_{n,r}(\sigma^{i_k}a)_{t_N}=v), \\
%     1 & (t_1>0,\iota_{n,r}(\sigma^{i_k}a)_{t_1} \cdots \iota_{n,r}(\sigma^{i_k}a)_{t_N}=v), \\
%     0 & (\text{otherwise})
% \end{cases}\]
% is also $(m-1)$-dependent and $\EE\bar{Y}^{(r)}_q(v) \le h_r^{-(N-N_n^{(0)})}$, thus for $q \ge mh_r^{3N}$,
% \[\PP(\{\bar{Y}^{(r)}_q(v) \ge 2h_r^{-(N-N_n^{(0)})}\}) \le \exp(-2h_r^N).\]

For any $a \in \Omega$, consider the random tower $S_{n+1}=\bigsqcup_{k=0}^{h_{n+1}-1}T^kF_{n+1}$ with height $h_{n+1}=h_nq+\sum_{i=1}^{q-1}a_i$. Then, for any specific choice of $r \in \{1,\cdots,n\}$ and $v \in \Sigma_r^N$, the probability that $\phi_{\xi_r,A,N_n}(T^kF_{n+1})=v$ holds for more than $2h_r^{-(N-N_n^{(0)})} \cdot |K_n|$ distinct values of $k \in K_n$ is below \[\PP\left(\left\{a \in \Omega:\max\{\bar{X}^{(r,1)}_{q^{(1)}}(v), \cdots,\bar{X}^{(r,h_n)}_{q^{(h_n)}}(v)\} \ge 2h_r^{-(N-N_n^{(0)})}\right\}\right)\] where $q^{(j)}$ is the largest integer such that \[\left((q^{(j)}-1)h_n+\sum_{i=1}^{q^{(j)}-1}a_i+j\right)+t_N \le h_{n+1}.\]
Fix $q=m(h_n^{3N}+1)$, then from
\begin{align*}
    & h_{n+1}-\left((mh_n^{3N}-1)h_n+\sum_{i=1}^{mh_n^{3N}-1}a_i+h_n\right)-t_N \\
    = \ & mh_n + \sum_{i=mh_n^{3N}}^{m(h_n^{3N}+1)-1}a_i-h_n-t_N \ge 0
\end{align*}
we have $q^{(j)} \ge mh_n^{3N}$ for $j=1,\cdots,h_n$. By (\ref{generic_stacks_lemma}), we have
\begin{multline*}
    \PP\left(\left\{a \in \Omega:\max\{\bar{X}^{(r,1)}_{q^{(1)}}(v), \cdots,\bar{X}^{(r,h_n)}_{q^{(h_n)}}(v)\} \ge 2h_r^{-(N-N_n^{(0)})}\right\}\right) \\ \le (h_n+1)\exp(-2h_r^N).
\end{multline*}

The probability that more than $2h_r^{-(N_n-N_n^{(0)})} \cdot |K_n|$ non-spacer levels share the same $\phi_{\xi_r,A,N_n}$-word is thus below
\[P(N):=(h_n+1)\sum_{r=1}^n(h_r+1)^N\exp(-2h_r^N)\]
which converges to 0 as $N \to \infty$. Therefore, by picking $N_n > N_n^{(0)}$ such that $P(N_n)<1$ and $q_n=m(h_n^{3N_n}+1)$, there exists $a_{n,1},\cdots,a_{n,q_n-1} \in \{0,1\}$ such that for $r=1,\cdots,n$, every $v \in \Sigma_r^{N_n}$ is the $\phi_{\xi_r,A,N_n}$-word for no more than $2h_r^{-(N_n-N_n^{(0)})} \cdot |K_n|$ non-spacer levels in $S_{n+1}$.
\end{proof}

\section{Blow-up of sequence entropy} \label{sec:seq_ent_blowup}

We now begin to construct a rank one system that satisfies the desired property. Intuitively, suppose we arrive at the $n$-th tower $S_n$ in the cut-and-stack procedure described in the remark of Definition \ref{rank_one_system}. We choose $N_n$ large enough so that the applicability of Proposition \ref{Bernoulli_pushforward} holds for a large portion of $t_1,\cdots,t_{N_n}$ that tends to 1 as $n \to \infty$. For this $N_n$, we hope to achieve the following:
\begin{itemize}
    \item[$(*)$] Choose the number of divisions $q_n \in \NN$ and a sufficiently irregular binary sequence $a_{n,1},\cdots,a_{n,q_n-1} \in \{0,1\}$ such that, for $r=1,\cdots,n$, every possible distinguishable orbit relative to the partitions induced by $S_r$ covers a measure below the order of $h_r^{-N_n}$.
\end{itemize}
 By Proposition \ref{disjoint_lower_bound}, this ensures that the size of sequence entropy is bounded from below by an expression on the order of $N_n \log h_r$. Proposition \ref{generic_stacks} guarantees the existence (in fact genericness) of parameters that satisfy $(*)$ for sufficiently large $q_n$.

\begin{proof}[Proof of Theorem \ref{seq_ent_blowup}]
    Our construction of the desired rank one system $(X,\cB,\mu,T)$ begins with the initial tower $S_1= \bigcup_{k=0}^{h_1-1}T^kF_1$ where $\mu(F_1)$ can ultimately be chosen to satisfy $\mu(X)=1$. Suppose that $S_i=\bigsqcup_{k=0}^{h_i-1}T^kF_i$ where $i = 1,\cdots,n$ is obtained after $n$ steps of the cut-and-stack procedure.

    \medskip
    
    By Proposition \ref{generic_stacks}, we may pick $N_n^{(0)}, N_n \in \NN$ with $N_n \ge nN_n^{(0)}$ sufficiently large, $q_n \in \NN$ dependent on $N_n$, and $a_1,\cdots,a_{q_n-1} \in \{0,1\}$ such that the resulting tower $S_{n+1}$ satisfies that, for $1 \le r \le n$, every $v \in \Sigma_r^{N_n}$ can be expressed in the form $\phi_{\xi_r,A,N_n}(T^kF_{n+1})$ for no more than $2h_r^{-(N_n-N_n^{(0)})} \cdot |K_n|$ distinct values of $k \in K_n$, where
    \[|K_n| \ge h_{n+1}-t_{N_n}-a_n \approx h_{n+1}\]
    from our discussion in Subsection~\ref{ssec:generic_stacks_prep}. Let \[X_{n+1}=\bigsqcup_{k \in K_n}T^kF_{n+1},\] then $\mu(X_{n+1}) = |K_n| \cdot \mu(F_{n+1}) \to 1$ and for any $v \in \Sigma_r^{N_n}$, we have
    \begin{align*}
        \mu(\phi_{\xi_r,A,N_n}^{-1}(v) \cap X_{n+1}) & \le 2h_r^{-(N_n-N_n^{(0)})} \cdot |K_n| \cdot \mu(F_{n+1}). \\
        & \le 2h_r^{-(N_n-N_n^{(0)})}.
    \end{align*}
    Since $f(x)$ is increasing for small $x$, applying Proposition \ref{disjoint_lower_bound} gives
    \begin{align*}
        \frac{1}{N_n}H\left(\bigvee_{k=1}^{N_n} T^{-t_k}\xi_r\right) & \ge \frac{1}{N_n}\sum_{w \in \Sigma_r^{N_n}}f(\mu(\phi_{\xi_r,A,N_n}^{-1}(w) \cap X_{n+1})) \\
        & \ge -\frac{1}{N_n} \cdot \mu(X_{n+1}) \cdot \log \left(2h_r^{-(N_n-N_n^{(0)})}\right)
    \end{align*}
    and hence \[h^A(T,\xi_r) = \varlimsup_{n \to \infty}\frac{1}{n}H\left(\bigvee_{k=1}^{n} T^{-t_k}\xi_r\right) \ge \log h_r\]
    for all $r$. Therefore,
    \[h^A(T) = \sup_\xi h^A(T,\xi) \ge \varlimsup_{r \to \infty} h^A(T,\xi_r) = +\infty.\]
\end{proof}

\begin{proof}[Proof of Corollary~\ref{seq_ent_blowup.2}]
    Denote by $\underline{d}(J)$ the lower density of $J$, then for any measurable partition $\xi$ we have
    \begin{align*}
        h^A(T,\xi) & \ge\varlimsup_{n \to \infty} \frac{1}{n}H\left(\bigvee_{k \in J \cap [1,n]} T^{-t_k}\xi\right) \\
        & \ge \varliminf_{n \to \infty} \frac{|J \cap [1,n]|}{n} \cdot \varlimsup_{n \to \infty} \frac{1}{|J \cap [1,n]|}H\left(\bigvee_{k \in J \cap [1,n]} T^{-t_k}\xi\right) \\
        & = \underline{d}(J) \cdot h^{A_J}(T,\xi).
    \end{align*}
    The result follows from applying Theorem~\ref{seq_ent_blowup} to $A_J$.
\end{proof}

\section{Cases of zero sequence entropy} \label{sec:zero_seq_ent}

This section will provide a clear picture of the point of transition from zero to positive sequence entropy. To obtain a positive sequence entropy, a rank one system must be built through a sufficiently complex cut-and-stack procedure, and this puts a minimal requirement on the growth rate of tower heights. 

\subsection{Preliminaries}

We will use the following generalisation of a well-known fact about conditional entropy which was already proven in \cite{Kushnirenko67}:
\begin{prop} \label{close_partitions_bound}
Let $\xi,\eta$ be two measurable partitions of $X$, then
\[\frac{1}{n}H\left(\bigvee_{k=1}^n T^{-t_k}\eta\right) \le \frac{1}{n}H\left(\bigvee_{k=1}^n T^{-t_k}\xi\right)+H(\eta|\xi).\]
\end{prop}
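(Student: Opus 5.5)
The plan is to reduce the statement to two standard facts about Shannon entropy: subadditivity of joins and the chain rule for conditional entropy. Write $\xi^{(n)}=\bigvee_{k=1}^n T^{-t_k}\xi$ and $\eta^{(n)}=\bigvee_{k=1}^n T^{-t_k}\eta$. First we use the monotonicity $H(\eta^{(n)}) \le H(\eta^{(n)} \vee \xi^{(n)})$. Then the chain rule rewrites the right-hand side as
\[H(\eta^{(n)} \vee \xi^{(n)}) = H(\xi^{(n)}) + H(\eta^{(n)} \mid \xi^{(n)}).\]
It then remains to show $H(\eta^{(n)} \mid \xi^{(n)}) \le nH(\eta\mid\xi)$ and divide by $n$.

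For this remaining bound we use subadditivity of conditional entropy in the first argument:
\[H\Bigl(\bigvee_{k=1}^n T^{-t_k}\eta \Bigm| \xi^{(n)}\Bigr) \le \sum_{k=1}^n H(T^{-t_k}\eta \mid \xi^{(n)}).\]
Next, since $\xi^{(n)}$ refines $T^{-t_k}\xi$ for each $k$, conditioning on the finer partition can only decrease entropy. This gives $H(T^{-t_k}\eta\mid \xi^{(n)}) \le H(T^{-t_k}\eta \mid T^{-t_k}\xi)$. Finally, because $T$ preserves $\mu$, the pair $(T^{-t_k}\eta, T^{-t_k}\xi)$ has the same joint atom measures as $(\eta,\xi)$. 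Hence $H(T^{-t_k}\eta\mid T^{-t_k}\xi)=H(\eta\mid\xi)$, and summing over $k$ yields $nH(\eta\mid\xi)$.

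None of these steps is a real obstacle, since each is a textbook property of conditional entropy. The only point to check is that everything is finite, so that the subtraction implicit in the chain rule is legitimate. This holds if $H(\xi)<\infty$, which is automatic for the finite partitions used in the paper. If $H(\eta\mid\xi)=\infty$ the inequality is trivial. If $\mu(X)\neq 1$ (the paper allows finite measure), we first normalize $\mu$ or note that all the identities above hold verbatim for finite measures with the conditional entropy defined accordingly.
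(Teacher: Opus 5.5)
Your proof is correct. The paper gives no proof of its own and cites Kushnirenko, and your argument (monotonicity under refinement, the chain rule, subadditivity of conditional entropy, conditioning on the finer partition $\bigvee_{k=1}^n T^{-t_k}\xi$, and $T$-invariance giving $H(T^{-t_k}\eta\mid T^{-t_k}\xi)=H(\eta\mid\xi)$) is the standard derivation of this inequality. Your remark on finite, non-normalized measures is also sound, since each identity and inequality you use holds verbatim for the paper's unnormalized $H$.
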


This inequality provides a useful method to compute an upper bound for sequence entropy:
\begin{prop} \label{slow_upper_bound}
    Let $\{\xi_n\}$ be a refining sequence of partitions of $X$ which approach the point partition, then
    \[h^A(T) \le \inf_\tau \left\{ \varlimsup_{n \to \infty} \frac{1}{n}H\left(\bigvee_{k=1}^n T^{-t_k}\xi_{\tau(n)}\right)\right\},\]
    where the infimum is taken over all non-decreasing functions $\tau:\NN \to \NN$ that diverge to infinity.
\end{prop}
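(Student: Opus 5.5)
The plan is to fix an arbitrary non-decreasing $\tau:\NN\to\NN$ with $\tau(n)\to\infty$ and an arbitrary finite partition $\eta$, and to bound $h^A(T,\eta)$ by the $\varlimsup$ on the right-hand side. Taking the supremum over $\eta$ and then the infimum over $\tau$ gives the claim. The tool is Proposition~\ref{close_partitions_bound}, applied with $\xi=\xi_{\tau(n)}$ and a varying index. The difficulty is that $\tau(n)$ changes with $n$, so a single conditional entropy term cannot simply be absorbed. We therefore exploit that $\tau(n)\to\infty$ while $\eta$ stays fixed.

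\textbf{Step 1.} Since the partitions $\xi_m$ refine and approach the point partition, we have $H(\eta\mid\xi_m)\to 0$ as $m\to\infty$. This is the standard martingale/continuity fact: $\bigvee_m\xi_m$ generates $\cB$ mod null sets, so each atom of $\eta$ is approximated in measure by unions of atoms of $\xi_m$. For finite $\eta$ the conditional entropy is then continuous and tends to zero.

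\textbf{Step 2.} For each $n$, Proposition~\ref{close_partitions_bound} with $\xi=\xi_{\tau(n)}$ gives
\[\frac1n H\Bigl(\bigvee_{k=1}^n T^{-t_k}\eta\Bigr)\le \frac1n H\Bigl(\bigvee_{k=1}^n T^{-t_k}\xi_{\tau(n)}\Bigr)+H(\eta\mid\xi_{\tau(n)}).\]
This inequality holds pointwise in $n$, because Proposition~\ref{close_partitions_bound} is valid for every pair of partitions and every $n$. Taking $\varlimsup_{n\to\infty}$, the last term tends to $0$ by Step 1 because $\tau(n)\to\infty$. Hence $h^A(T,\eta)$ is bounded by the $\varlimsup$ for this $\tau$.

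\textbf{Step 3.} Take the supremum over finite $\eta$ to bound $h^A(T)$, then the infimum over $\tau$.

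The main obstacle is Step 1 in the setting actually used: the $\xi_m$ are finite, but if one prefers countable $\eta$ or if $\xi_{\tau(n)}$ has unbounded cardinality, one must check that $H(\eta\mid\xi_m)\to0$ still holds. For the definition of $h^A(T)$ as a supremum over finite partitions, it suffices to treat finite $\eta$. In that case the standard argument applies: approximate each atom $E$ of $\eta$ by a $\xi_m$-measurable set $E_m$ with $\mu(E\triangle E_m)$ small, and bound $H(\eta\mid\xi_m)$ by $H(\eta\mid\eta_m)$, where $\eta_m$ is the partition generated by the $E_m$. This latter quantity is small by the usual Fano-type estimate.
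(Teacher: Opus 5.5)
Your proposal is correct and follows the paper's proof essentially verbatim: apply Proposition~\ref{close_partitions_bound} with $\xi_{\tau(n)}$ for each $n$, use $H(\eta\mid\xi_{\tau(n)})\to 0$ because $\tau(n)\to\infty$, and then take the supremum over $\eta$ and the infimum over $\tau$. The one addition is your Step 1, which justifies the vanishing of the conditional entropy; the paper simply asserts this fact.
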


\begin{proof}
    Let $\eta$ be any measurable partition of $X$, then by Proposition~\ref{close_partitions_bound} we have
    \[\frac{1}{n}H\left(\bigvee_{k=1}^n T^{-t_k}\eta\right) \le \frac{1}{n}H\left(\bigvee_{k=1}^n T^{-t_k}\xi_{\tau(n)}\right)+H(\eta|\xi_{\tau(n)})\]
    for any $\tau$. If $\tau(n) \to \infty$, then $H(\eta | \xi_{\tau(n)}) \to 0$ and hence
    \[h^A(T,\eta) \le \varlimsup_{n \to \infty} \frac{1}{n}H\left(\bigvee_{k=1}^n T^{-t_k}\xi_{\tau(n)}\right)\]
    for any measurable partition $\eta$.
\end{proof}

A direct application of Jensen's inequality yields the following:
\begin{prop} \label{disjoint_upper_bound}
Let $E = \bigsqcup_{i=1}^mE_i$ be a disjoint union of measurable subsets of $X$, then \[\sum_{i=1}^m -\mu(E_i) \log\mu(E_i) \le -\mu(X) \log \mu(X) + \mu(X) \log m.\]
\end{prop}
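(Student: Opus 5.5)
The plan is to apply Jensen's inequality to the concave function $f(x)=-x\log x$. Write $s=\mu(E)=\sum_{i=1}^m\mu(E_i)$ and assume $s>0$; otherwise every term vanishes and there is nothing to prove. Since $f$ is concave on $[0,\infty)$, averaging over the $m$ indices gives
\[\frac{1}{m}\sum_{i=1}^m f(\mu(E_i)) \le f\Big(\frac{1}{m}\sum_{i=1}^m\mu(E_i)\Big)=f\Big(\frac{s}{m}\Big)=-\frac{s}{m}\log\frac{s}{m}.\]
Multiplying by $m$ yields
\[\sum_{i=1}^m f(\mu(E_i))\le -s\log s+s\log m=-\mu(E)\log\mu(E)+\mu(E)\log m.\]
This is the bound with $\mu(E)$ in place of $\mu(X)$. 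It is exactly the stated inequality in the case $E=X$, which is the case used when the $E_i$ are the atoms of a partition.

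To replace $\mu(E)$ by $\mu(X)$ when $E\subsetneq X$, I would use monotonicity of $g(s)=s\log(m/s)$. Its derivative is $g'(s)=\log(m/s)-1$, so $g$ is non-decreasing on $[0,m/e]$. Hence if $\mu(X)\le m/e$, then $g(\mu(E))\le g(\mu(X))$, which is the claimed inequality.

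This last step is the only delicate point, because some condition of this kind is genuinely needed. For example, with $m=1$, $\mu(X)=1$ and $\mu(E_1)=1/2$, the left side is $\tfrac12\log 2>0$ while the right side is $0$. So the statement as written is false for arbitrary $E\subseteq X$.

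In the intended applications $X$ is normalised to a probability space, so the condition $\mu(X)\le m/e$ holds as soon as $m\ge 3$. In that regime, and in every application with $E=X$, the argument above gives the result. Otherwise I would use the $\mu(E)$ form of the bound, which always holds and suffices for the upper estimates in the proof of Theorem~\ref{zero_seq_ent}.
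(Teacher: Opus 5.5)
Your proposal is correct and uses the same Jensen argument the paper relies on; the paper's entire proof is the phrase ``a direct application of Jensen's inequality''. Your counterexample ($m=1$, $\mu(X)=1$, $\mu(E_1)=\tfrac12$) is also valid: the inequality as literally stated fails for $E\subsetneq X$, so it must be read with $E=X$, or under an extra hypothesis such as your $\mu(X)\le m/e$. This does no harm to the paper, because in the proofs of Theorems~\ref{zero_seq_ent} and~\ref{weak_flex_seq_ent} the sets are the atoms of $\bigvee_{k=1}^N T^{-t_k}\xi_{\tau(N)}$, which cover $X$ with $\mu(X)=1$, and there your $\mu(E)$ form gives exactly the bound by the logarithm of the number of attainable words that is used.
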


\begin{rem}
    Proposition~\ref{disjoint_upper_bound} allows us to obtain a crude upper bound that loosely represents the topological sequence entropy of a symbolic dynamical system; this is sufficient for some purposes such as in the case of Theorem~\ref{zero_seq_ent}.
\end{rem}

\subsection{Outline and proof}

The proof of Theorem~\ref{zero_seq_ent} embodies a mechanism with an interesting narrative. The story begins with Proposition~\ref{slow_upper_bound} that comes with a general principle to control the sequence entropy of any measure-preserving system $X$: better upper bounds are obtained the slower we take increments of the referencing partition $\xi_{\tau(N)}$. Once this partition is fixed for any given sampling length $N$, a quantitative upper bound based on the number of $\phi_{\xi_{\tau(N)},A,N}$-words in $X$ can then be obtained by Proposition~\ref{disjoint_upper_bound}.

\medskip

In the case of a rank one system where we take the induced partitions $\xi_n=\xi_{S_n}$, the linear structure of these systems allows us to obtain an upper bound for this word count through a slick combinatorial argument. However, this combinatorial count becomes ineffective when the reference partition corresponds to an approximating tower $S_n$ that is far too short compared to the $A$-orbit.

\medskip

The end product of the mechanism is a balancing condition for $\tau$ that captures this fierce struggle between the slowness of $\tau$ demanded by Proposition~\ref{slow_upper_bound} (Equation (\ref{1.2.eq6})) and the gradual deterioration of the upper bound in Proposition~\ref{disjoint_upper_bound} from the combinatorial count of words (Equation (\ref{1.2.eq4})).

\begin{rem}
    Chronologically, a weaker version of Theorem~\ref{zero_seq_ent}~\ref{zero_seq_ent.1} was obtained before Theorem \ref{seq_ent_blowup} and has served as a valuable guide which highlights the necessity of rapid growth of tower heights. Subsequent results in Theorem~\ref{zero_seq_ent}  were later obtained by varying over diverging functions $\tau$ as enabled by Proposition~\ref{slow_upper_bound}.
\end{rem}

\begin{proof}[Proof of Theorem~\ref{zero_seq_ent}]

Let $A=\{t_n\}$ be a subexponential but superlinear sequence and denote its maximal gap by $s_n = \max_{1 \le i \le n-1}\{t_{i+1}-t_i\}$ for $n \ge 2$, then $s_n \to \infty \ (n \to \infty).$ Let $\{c_n\}$ be an arbitrary sequence of positive integers and define
\[\tau(N) = \min\set{n \in \NN}{h_n>\frac{t_N}{c_N}}.\]

Let $\cV$ be the set of all $\phi_{\xi_n,A,N}$-words in $X$ where $n=\tau(N)$. Since $t_N<c_Nh_n$, any $w \in \cV$ is of the form $v_00^{b_1}v_10^{b_2} \cdots v_{c_N-1}0^{b_{c_N}}v_{c_N}$ where
\begin{enumerate}
    \item $0 \le b_i \le N$ and $b_1+\cdots+b_{c_N} \le N$;
    \item $v_0$ is empty or begins with one of the symbols $1,\cdots,h_n$;
    \item $v_1,\cdots,v_{c_N}$ are empty or begin with one of the symbols $1,\cdots,s_N$;
\end{enumerate}
hence \[|\cV| \le \binom{N+c_N}{N}(h_n+1)(s_N+1)^{c_N}.\]
By Proposition~\ref{disjoint_upper_bound}, we have
\begin{equation*}
    H\left(\bigvee_{k=1}^N T^{-t_k}\xi_{\tau(N)}\right) = \sum_{w \in \cV} f(\mu\phi_{\xi_n,A,N}^{-1}(w)) \le \log |\cV|
\end{equation*}
and thus by Proposition~\ref{close_partitions_bound},
\begin{multline} \label{1.2.eq3}
    \frac{1}{N}H\left(\bigvee_{k=1}^N T^{-t_k}\eta\right) \le \frac{1}{N} \log (h_{\tau(N)}+1) \\ + \frac{c_N}{N} \log (s_N+1) + \frac{1}{N}\log\binom{N+c_N}{N} + H(\eta|\xi_{\tau(N)}).
\end{multline}

Suppose that $\{c_n\}$ satisfies
\begin{equation} \label{1.2.eq4}
    \lim_{n \to \infty}\frac{c_n}{n} \log s_n = 0,
\end{equation}
then $\lim_{n \to \infty}\frac{c_n}{n}=0$. From here it can be shown that
\begin{equation} \label{1.2.eq5}
    \lim_{n \to \infty} \frac{1}{n} \log \binom{n+c_n}{n}=0,
\end{equation}
and since $\frac{t_n}{c_n}\to \infty \ (n \to \infty)$ we have $\tau(N) \to \infty \ (N \to \infty)$. On the other hand, if $\vphi:\RR^+ \to \RR^+$ is an increasing function such that \[\varlimsup_{n \to \infty}\frac{\log h_{n+1}}{\vphi(h_n)} < \infty,\] then there exists $M>0$ such that
\[\log h_{\tau(N)} < M\vphi\left(h_{\tau(N)-1}\right) \le M\vphi\left(\frac{t_N}{c_N}\right).\]
We can therefore conclude from (\ref{1.2.eq3}) that $h^A(T,\eta)=0$ for all $\eta$ if $\{c_n\}$ further satisfies
\begin{equation} \label{1.2.eq6}
    \lim_{n \to \infty}\frac{1}{n} \vphi\left(\frac{t_n}{c_n}\right) = 0.
\end{equation}

The last part of the proof is to balance the conditions (\ref{1.2.eq4}) and (\ref{1.2.eq6}):

\begin{enumerate}
    \item If $\vphi(n)=\log n$, the sequence $c_n=1$ satisfies both conditions as long as $\{t_n\}$ is subexponential.
    \item If $\vphi(n)=n^\beta$ and $t_n = O(n^\alpha)$, we have
    \begin{align*}
        & \text{there exists } \{c_n\} \subseteq \NN \text{ such that} \begin{cases}
            \displaystyle \lim_{n \to \infty}\frac{c_n}{n} \log s_n = 0, \\
            \displaystyle\lim_{n \to \infty}\frac{1}{n} \vphi\left(\frac{t_n}{c_n}\right) = 0
        \end{cases} \\
        \iff & \text{there exists } \{c_n\} \subseteq \NN \text{ such that} \begin{cases}
            \displaystyle c_n \ll n/\log n \\
            \displaystyle c_n \gg n^{\alpha-1/\beta}
        \end{cases} \quad (n \to \infty) \\
        \iff & n^{\alpha-1/\beta} \ll n/\log n \quad (n \to \infty) \\
        \iff & \alpha<1+\frac{1}{\beta}
    \end{align*}
    where we take $c_n=\lfloor n^{(\alpha-1/\beta+1)/2}\rfloor$.
    \item If $\vphi(n)=e^{n^\beta}$ and $t_n = \lfloor Cn(\log n)^\alpha \rfloor$ where $\alpha\beta < 1$, the sequence $c_n=\lfloor n/(\log\log n)^2\rfloor$ satisfies
    \[\varlimsup_{n \to \infty} \frac{c_n}{n} \log s_n \le \varlimsup_{n \to \infty} \frac{c_n(\alpha\log\log n+O(1))}{n} = 0\]
    and
    \[\varlimsup_{n \to \infty} \frac{1}{n} \vphi\left(\frac{t_n}{c_n}\right) = \varlimsup_{n \to \infty} n^{(C(\log\log n)^2)^\beta(\log n)^{\alpha\beta-1}-1} = 0.\]
\end{enumerate}
\end{proof}

\begin{rem} On the proof of Theorem~\ref{zero_seq_ent}:
\begin{itemize}
    \item The balancing conditions in (\ref{1.2.eq4}) and (\ref{1.2.eq6}) provides a general framework to calculate a critical threshold of tower growth rate for any specific choice of $A=\{t_n\}$. The mechanism breaks down if $\{t_n\}$ (and therefore $\{s_n\}$) is exponential or above, as (\ref{1.2.eq4}) can no longer hold regardless of the choice of $\{c_n\}$.
    \item Since (\ref{1.2.eq5}) is logically deduced from (\ref{1.2.eq4}), the effect of the number of spacer levels $a_{n,i}$ added between slices of $S_n$, in itself, is dominated by its own effect on perturbing the pre-existing levels in $S_n$ and does not contribute essentially to the sequence entropy. This suggests that the assumption $a_{n,i} \in \{0,1\}$ throughout the proof of Theorem \ref{seq_ent_blowup} does not detract too far from the general theory.
    \item A better upper bound estimate of $|\cV|$ is in fact available, however, if a uniform upper bound for $a_{n,i}$ exists, as this inhibits the ability of spacer levels in perturbing the level sets in $S_n$. We will take advantage of this observation in Section~\ref{sec:weak_seq_ent_flex}.
\end{itemize}
\end{rem}

\subsection{Establishing the critical threshold}

The proof of Theorem~\ref{pos_seq_ent} utilises a simplified version of the proof of Theorem~\ref{seq_ent_blowup}.

\begin{proof}[Sketch of proof of Theorem~\ref{pos_seq_ent}] Note that one only needs to demonstrate that $h^A(T,\xi)>0$ for the initial tower $\xi = \xi_{S_1}$ to which we may set $h_1=2$. Define
\[\iota(a) = \Phi(S_n)0^{a_1}\Phi(S_n)0^{a_2}\cdots \in \Sigma^\NN\]
where
\[\Phi(S_n) = \textrm{AB}0^{b_{n,1}}\textrm{AB}0^{b_{n,2}} \cdots \textrm{AB}0^{b_{n,g_n-1}}\textrm{AB} \quad (b_{n,i} \in \{0,1\}).\]
(The coding symbols in $\Sigma=\{0,\textrm{A},\textrm{B}\}$ were renamed for added clarity.) Then for any $k \in \NN$ and $l_0,l_1 \in \Sigma$ with $\PP(\iota(a)_k = l_0) > 0$,
\[\PP(\iota(a)_{k+s}=l_1 \mid \iota(a)_{k}=l_0) \le \frac{1}{2}\]
as long as $s \ge h_n+1$. An analogous result to Proposition~\ref{generic_stacks} is then obtained by setting the following parameters:
\begin{itemize}
    \item For case \ref{pos_seq_ent.1}, set $N_n^{(0)} = \lfloor h_n^{\beta} \rfloor + 1$ so that for $k \ge N_n^{(0)}$,
    \[t_{k+1}-t_k \ge \alpha h_n^{(\alpha-1)\beta}-1 > h_n+1\]
    for sufficiently large $n$ as $\alpha \ge 1+1/\beta$, and
    \[\varlimsup_{n \to \infty} \frac{\log 2^{N_n^{(0)}}}{h_n^{\beta}} = \log 2 < \infty.\]
    \item For case \ref{pos_seq_ent.2}, set $N_n^{(0)} = \lfloor e^{h_n^\beta+1} \rfloor + 1$ so that for $k \ge N_n^{(0)}$,
    \[t_{k+1}-t_k \ge e^\alpha h_n^{\alpha\beta} > h_n+1\]
    for sufficiently large $n$ as $\alpha \ge 1/\beta$, and
    \[\varlimsup_{n \to \infty} \frac{\log 2^{N_n^{(0)}}}{e^{h_n^{\beta}}} = e\log 2 < \infty.\]
    \item In any case, set $q_n=2^{N_n^{(0)}}$ and $N_n=\gamma N_n^{(0)}$ for some $\gamma>1$ so that
    \[\PP(\iota(a)_{k+t_1} \cdots \iota(a)_{k+t_{N_n}} = l_1 \cdots l_{N_n}) \le 2^{-(1-\frac{1}{\gamma})N_n},\]
    and for $\veps > 0$ set $\delta = 2^{-(\frac{1}{2\gamma}-\veps)N_n}$ so that for sufficiently large $n$, $q_n\delta^2 = 2^{2\veps N_n}$ forces eventual vanishing of the total probability of failing sequences
    \[P(N_n) = 3^{N_n}\exp(-2\lfloor q_n/t_{N_n} \rfloor \delta^2).\]
\end{itemize}
By inheriting notations from Section~\ref{sec:seq_ent_blowup}, this establishes the existence of parameters $a_1,\cdots,a_{q_n-1}$ that satisfies
\[\mu(\phi_{\xi,A,N_n}^{-1}(v) \cap X_{n+1}) \le 2^{-cN_n+1}\]
for all $v \in \Sigma^{N_n}$, where \[c=\max\left\{1-\frac{1}{\gamma},\frac{1}{2\gamma}-\veps\right\}.\]
Thus we obtain, by setting $\gamma=3/2$ and taking $\veps \to 0$,
\[h^A(T,\xi) \ge \frac{1}{3} \log 2 > 0.\]
\end{proof}

\section{Flexibility of sequence entropy} \label{sec:weak_seq_ent_flex}

We now attempt to tackle the flexibility problem for sequence entropy along polynomial sequences of the form $t_n=\lfloor n^\alpha \rfloor$ by combining the techniques developed thus far. We allow the number of spacer levels added between each slice to range over $0,1,\cdots,L-1$ for some integer $L \ge 2$. The resulting system remains to be of rank one as the total number of spacers $a_n = \sum_{i=1}^{q_n-1}a_{n,i}$ added onto $S_n$ satisfies
\[\sum_{n=1}^\infty \frac{a_n}{h_{n+1}} \le \sum_{n=1}^\infty \frac{q_n(L-1)}{q_nh_n} < +\infty.\]

The sequence of tower heights used in the following construction adheres to the critical threshold obtained in Theorem~\ref{pos_seq_ent}~\ref{pos_seq_ent.1}.

\begin{proof}[Proof of Theorem~\ref{weak_flex_seq_ent}]
    Let $L \ge 2$ be a fixed integer. The construction of the rank one system with spacer levels bounded by $L-1$ begins with the initial tower $S:=S_1 = \bigsqcup_{k=0}^{h_1-1}T^kF_1$ where $h:=h_1>L$. Let $\xi:=\xi_S$ be the partition induced by the initial tower, and denote by $\xi' \succ \xi$ the natural refinement by the Kakutani skyscraper on $F_1$ as described in Subsection~\ref{ssec:refinement_induced}, to be determined recursively alongside the construction of $X$. Since $a_{n,i}$ is uniformly bounded by $L-1$, $\xi'$ will be a finite partition with $h+L-1$ atoms. Under the limiting process, the coding function $\phi_{\xi'}:X \to \Sigma'$ will be defined a.e. on $X$ with $\Sigma'=\{1,\cdots,h+L-1\}$. The coding maps $\Phi$ and $\Phi'$ will refer to the $\phi_\xi$- and $\phi_{\xi'}$-names of a tower respectively.
    
    Suppose that the towers $S_1,\cdots,S_n$ have already been determined, so that $S_n$ is composed by $g_n=\prod_{k=1}^{n-1}q_k$ slices of $S_1$ with sequence of spacer counts $b_{n,i} \in \{0,1,\cdots,L-1\}$ where $1 \le i \le g_n-1$. The $\phi_{\xi'}$-word of $S_n$ is well-defined and is given by
    \[\Phi'(S_n) = \Phi(S)c_{n,1}\Phi(S)c_{n,2} \cdots \Phi(S)c_{n,g_n-1}\Phi(S)\]
    where
    \[c_{n,i} = (h+1,\cdots,h+b_{n,i}).\]

    Let $(\Omega,\cF,\PP,\sigma) \sim \mathscr{B}(\frac{1}{L},\cdots,\frac{1}{L})$ be the Bernoulli shift on the alphabet $\{0,1,\cdots,L-1\}$, and define $\iota_n:\Omega \to (\Sigma')^\NN$ by
    \[\iota_n(a) = \Phi'(S_n)c(a_1)\Phi'(S_n)c(a_2) \cdots\]
    where for $0 \le l \le L-1$ denote
    \[c(l) = (h+1,\cdots,h+l).\]

    Let
    \[N_n = \left\lfloor\left(\frac{h_n-2}{\alpha}\right)^\beta\right\rfloor-1,\]
    then for $k \le N_n$,
    \begin{equation} \label{eq:weak_flex.1}
        t_{k+1}-t_k \le (k+1)^\alpha-k^\alpha+1 \le \alpha(k+1)^{\alpha-1}+1 \le h_n-1.
    \end{equation}

    For each $v \in (\Sigma')^{N_n}, j\in [1,h_n] \cap \ZZ$, define a sequence of random variables $(X_k^{(j)}(v))_{k=1}^\infty$ by
    \[X_k^{(j)}(v) = \begin{cases}
        1 & (\text{if } \iota_n(\sigma^{k-1}(a))_{j+t_1} \cdots \iota_n(\sigma^{k-1}(a)) = v); \\ 
        0 & (\text{otherwise}),
    \end{cases}\]
    then (\ref{eq:weak_flex.1}) guarantees that there is at most one possible choice of the collection of spacer counts $c_i$ which successfully represents $v$, hence
    \[\PP(\iota_n(\sigma^{k-1}(a))_{j+t_1} \cdots \iota_n(\sigma^{k-1}(a))_{j+t_{N_n}} = v) \le L^{-\lfloor (t_{N_n}-1)/(h_n+L)\rfloor}.\]
    Fix $\veps > 0$, then since
    \[\left\lfloor \frac{t_{N_n}-1}{h_n+L} \right\rfloor \approx \frac{1}{\alpha}N_n \approx \frac{1}{\alpha^{1+\beta}}h_n^\beta\]
    we have
    \[\EE X_k^{(j)}(v) \le L^{-(\alpha^{-1-\beta} -\veps)h_n^\beta}\]
    for sufficiently large $n$.

    Let $m = \lfloor t_{N_n}/h_n \rfloor+2$, then $h_n m > h_n+t_n$ and hence $(X_k^{(j)}(v))_{k=1}^\infty$ is $m$-dependent, thus for all $q \in \NN$ and $\delta>0$,
    \[\PP(\bar{X}_q^{(j)}(v)-\EE\bar{X}_q^{(j)}(v) \ge \delta) \le \exp\left(-2\lfloor q/m \rfloor\delta^2\right)\]
    where $\bar{X}_q^{(j)}(v) = \frac{1}{q}\sum_{k=1}^qX_k^{(j)}(v)$ satisfies
    \[\EE\bar{X}_q^{(j)}(v) \le L^{-(\alpha^{-1-\beta} -\veps)h_n^\beta}.\]
    Pick $q=q_n=\lfloor\kappa^{h_n^\beta}\rfloor+1, \delta=\kappa^{-(\frac{1}{2}-\veps)h_n^\beta}$ for some $\kappa > 1$, then $q_n\delta^2 \ge \kappa^{2\veps h_n^\beta}$ and
    \begin{align*}
        P(N_n) & := \sum_{j=1}^{h_n}\sum_{v \in (\Sigma')^{N_n}} \PP(\bar{X}_q^{(j)}(v) \ge L^{-(\alpha^{-1-\beta} -\veps)h_n^\beta}+\kappa^{-(\frac{1}{2}-\veps)h_n^\beta}) \\
        & \le (h+L)^{N_n}\exp(-2\lfloor q_n/m \rfloor \delta^2)
    \end{align*}
    converges to 0 as $n \to \infty$. Thus, for large $n$, we can pick $a_{n,1},\cdots,a_{n,q_n-1} \in \{0,1,\cdots,L-1\}$ such that if $S_{n+1}=\bigsqcup_{k=0}^{h_{n+1}-1}T^kF_{n+1}$ is defined by
    \[\Phi(S_{n+1})=\Phi(S_n)0^{a_{n,1}}\Phi(S_n)0^{a_{n,2}} \cdots \Phi(S_n)0^{a_{n,q_n-1}}\Phi(S_n),\]
    then
    \[\Phi'(S_{n+1})=\Phi'(S_n)c(a_{n,1})\Phi'(S_n)c(a_{n,2}) \cdots \Phi'(S_n)c(a_{n,q_n-1})\Phi'(S_n)\]
    is well-defined and each $v \in (\Sigma')^{N_n}$ is represented as $\phi_{\xi',A,N_n}(T^kF_{n+1})$ for no more than
    \[|K_n| \cdot 2\max\{L^{-(\alpha^{-1-\beta} -\veps)},\kappa^{-(\frac{1}{2}-\veps)}\}^{h_n^\beta}\]
    distinct choices of $k \in K_n$ where
    \[K_n = \set{k \in \ZZ}{0 \le k \le h_{n+1}-t_{N_n}-1,T^kF_{n+1} \subseteq S_n}.\]
    Let $X_{n+1} = \bigsqcup_{k \in K_n}T^kF_{n+1} \subseteq S_{n+1}$, then
    \[\mu(\phi^{-1}_{\xi',A,N_n}(v) \cap X_{n+1}) \le 2\max\{L^{-(\alpha^{-1-\beta} -\veps)},\kappa^{-(\frac{1}{2}-\veps)}\}^{h_n^\beta}\]
    for all $v \in (\Sigma')^{N_n}$, and we have in general
    \[\mu(X_{n+1}) \ge \frac{h_{n+1}-t_{N_n}-a_n}{h_{n+1}}\mu(S_{n+1}) \to 1 \quad (n \to \infty),\]
    as established in Subsection~\ref{ssec:generic_stacks_prep}. Therefore
    \begin{align*}
        h^A(T,\xi') & \ge \varlimsup_{n \to \infty} \frac{1}{N_n}\sum_{v \in (\Sigma')^{N_n}} f(\mu(\phi^{-1}_{\xi',A,N_n}(v) \cap X_{n+1})) \\
        & \ge \varlimsup_{n \to \infty} \frac{h_n^{\beta}}{N_n} \cdot \mu(X_{n+1}) \cdot \left(- \log \max\left\{L^{-(\alpha^{-1-\beta} -\veps)},\kappa^{-(\frac{1}{2}-\veps)}\right\}\right) \\
        & = \alpha^\beta \log \min\{L^{\alpha^{-1-\beta} -\veps},\kappa^{\frac{1}{2}-\veps}\}.
    \end{align*}
    Taking $\veps \to 0$ thus gives
    \[h^A(T) \ge \alpha^\beta \log \min\{L^{\alpha^{-1-\beta} },\kappa^{\frac{1}{2}}\}.\]

    On the other hand, for $q_n=\lfloor \kappa^{h_n^\beta} \rfloor + 1$ we have
    \[\lim_{n \to \infty} \frac{\log h_{n+1}}{h_n^\beta} = \kappa.\]
    Define
    \[\tau(N) = \min\set{n \in \NN}{h_n \ge \frac{t_N}{\lambda N}}\]
    for some $\lambda>0$, then $\tau(N) \to \infty$ and each $\phi_{\xi_{\tau(N)},A,N}$-word spans over at most $\lceil \lambda N \rceil+1$ copies of $\Phi_{\tau(N)}(S_{\tau(N)})$ and hence is determined by the first letter and at most $\lceil \lambda N \rceil$ choices of spacer counts. Therefore, the number of attainable $\phi_{\xi_{\tau(N)},A,N}$-words is below $(h_{\tau(N)}+1)L^{\lambda N+1}$. By Propositions~\ref{slow_upper_bound} and \ref{disjoint_upper_bound},
    \begin{align*}
        h^A(T) & \le \varlimsup_{N \to \infty}\frac{1}{N}\left(\log(h_{\tau(N)}+1)+(\lambda N+1)\log L\right) \\
        & \le \varlimsup_{N \to \infty}\left(\lambda^{-\beta}\log\kappa+\lambda\log L\right) \\
        & = (1+\beta) \cdot \left(\frac{1}{\beta^\beta}\log \kappa \cdot (\log L)^\beta\right)^{\frac{1}{1+\beta}}
    \end{align*}
    where we pick $\lambda$ such that $\lambda^{-\beta} \log \kappa = \frac{1}{\beta}\lambda\log L$.
    
    We conclude that the resulting rank one system $(X,\cB,\mu,T)$ satisfies
    \[\alpha^\beta \log \min\{L^{\alpha^{-1-\beta} },\kappa^{\frac{1}{2}}\} \le h^A(T) \le (1+\beta) \cdot \left(\frac{1}{\beta^\beta}\log \kappa \cdot (\log L)^\beta\right)^{\frac{1}{1+\beta}}.\]
    Part~\ref{small_flex} is obtained by picking $L=2$ and $\kappa=1+\veps$ for sufficiently small $\veps$. For part~\ref{large_flex} we pick $\kappa=L^{2\alpha^{-1-\beta}}$.
\end{proof}

\begin{rem}
    To obtain stronger flexibility results, a better estimate than Proposition~\ref{disjoint_upper_bound} of an upper bound that tailors specifically to metric sequence entropy may be in order as the variational principle does not apply to sequence entropy. Unfortunately, the result of Theorem~\ref{weak_flex_seq_ent} does not generalise immediately to abstract subexponential sequences as the method used in Section~\ref{sec:zero_seq_ent} to obtain an upper bound for $h^A(T)$ relies on concrete specifications on the sequence $A$ itself.
\end{rem}

\bibliography{refs}
\end{document}